\newtheorem{theorem}{Theorem}
\newtheorem{proposition}[theorem]{Proposition}
\newtheorem{lemma}[theorem]{Lemma}
\newtheorem{definition}[theorem]{Definition}
\numberwithin{theorem}{section}
\newcommand{\ignore}[1]{}
\newcommand{\hlineFix}{\rule{0pt}{17pt}}
\newcommand{\C}{\mathbb{C}}
\newcommand{\N}{\mathbb{N}}
\newcommand{\Q}{\mathbb{Q}}
\newcommand{\Z}{\mathbb{Z}}
\newcommand{\spmox}[1]{\lambda_{#1}}
\newcommand{\spmoh}{\spmox{2}}
\newcommand{\spmot}{\spmox{3}}
\newcommand{\spmoq}{\spmox{4}}
\newcommand{\dpmox}[1]{(p-1)/{#1}}
\newcommand{\dpmoh}{\dpmox{2}}
\newcommand{\fpmox}[1]{\frac{p-1}{#1}}
\newcommand{\fpmoh}{\fpmox{2}}
\newcommand{\fppox}[1]{\frac{p+1}{#1}}
\newcommand{\fppoh}{\fppox{2}}
\newcommand{\ph}[2]{\left ( #1 \right)_{#2}}
\newcommand{\citep}[2]{\cite[p.~#2]{#1}}
\newcommand{\Fbar}{\bar F}
\newcommand{\Gbar}{\bar G}
\newcommand{\Fdeg}{\widetilde{F}}
\newcommand{\elide}[1]{}
\begin{document}

\title{Streamlined WZ method proofs of Van Hamme supercongruences}

\author{Andr\'es Valloud}

\date{October 10, 2025}

\begin{abstract}
Using the WZ method to prove supercongruences critically depends on an inspired WZ pair choice.  This paper demonstrates a procedure for finding WZ pair candidates to prove a given supercongruence.  When suitable WZ pairs are thus obtained, coupling them with the $p$-adic approximation of $\Gamma_p$ by Long and Ramakrishna enables uniform proofs for the Van Hamme supercongruences (B.2), (C.2), (D.2), (E.2), (F.2), (G.2), and (H.2).  This approach also yields the known extensions of (G.2) modulo $p^4$, and of (H.2) modulo $p^3$ when $p$ is $3$ modulo $4$.  Finally, the Van Hamme supercongruence (I.2) is shown to be a special case of the WZ method where Gosper's algorithm itself succeeds.
\end{abstract}

\subjclass[2020]{33C20, 33F10}

\maketitle

\thispagestyle{empty}

\section{Introduction}

In 1997, Van Hamme \cite{VanHamme97} stated thirteen $p$-adic analogues to the Ramanujan hypergeometric series for $1 / \pi$ of 1914.  These analogues claim that certain truncated hypergeometric series satisfy congruences that hold modulo unexpectedly large prime powers.  When this phenomenon occurs, the resulting identities are called supercongruences.  The analogues of Van Hamme are labeled (A.2) through (M.2).  Several authors have contributed proofs of these using a variety of methods, including Van Hamme ((C.2), (H.2), (I.2)) \cite{VanHamme97}, Kilbourn ((M.2)) \cite{Kilbourn}, McCarthy and Osburn ((A.2)) \cite{McCarthyOsburn}, Mortenson ((B.2)) \cite{Mortenson}, Zudilin ((B.2)) \cite{Zudilin09}, Long ((B.2)) \cite{Long}, Long and Ramakrishna ((D.2), (H.2)) \cite{LongRama}, Swisher ((C.2), (E.2), (F.2), (G.2), (L.2)) \cite{Swisher15}, He ((E.2), (F.2)) \cite{He15}, and Osburn and Zudilin ((K.2)) \cite{OsburnZudilin15}.  The last of these supercongruences was proved in 2016.  For a table summarizing the Van Hamme supercongruences, see Van Hamme \cite{VanHamme97} and Swisher \cite{Swisher15}.

For a family of primes $p$, the typical Van Hamme supercongruence is an identity of the form
\begin{equation} \label{e-van-hamme}
\sum_{n=0}^{\frac{p-1}{d}} u(n)c^n \cdot \frac{\ph{1/a}{n}^m}{\ph{1}{n}^m} \equiv f(p) \pmod{p^r},
\end{equation}

\noindent
where $d \in \N$ and $c$ is some constant.  The positive integers $m, r$ are relatively small, and the rational $a \neq 0$ often has small numerator and denominator.  The expression $u(n)$ is a polynomial in $n$.  The function $f(p)$ is usually a simple monomial depending on $p$ perhaps accompanied by factors such as a power of $-1$ or an instance of Morita's $p$-adic gamma function $\Gamma_p$.  Other supercongruences may have a product of different Pochhammer symbols, each with their own value of $a$.

Although the Van Hamme supercongruence expressions are similar to each other, their proofs can vary substantially.  Often, with these and other supercongruence proofs, there is a unique element of ingenuity one wishes could be developed into a widely applicable method.  This is especially so in view of Sun's 99-page paper \cite{Sun} which contains 100 supercongruence conjectures, as well as the various supercongruence conjectures listed by Zudilin \cite{Zudilin09}, Swisher \cite{Swisher15}, and many others.

The Van Hamme supercongruence original proof methods include the WZ method used by Zudilin and Osburn to prove both (B.2) and (K.2).  The WZ method of proof looks inviting for generalization because it relies on the Wilf-Zeilberger algorithm \cite{AeqB}.  In these proofs, a clever choice of factors leads the WZ algorithm to seemingly do much of the creative work needed to establish the desired supercongruences.  An obstacle to the WZ method is that, to the author's knowledge, finding the necessary factors has relied on ad hoc methods.  To help address this shortcoming, this paper demonstrates a procedure to pick factor candidates and provides uniform proofs for (B.2), (C.2), (D.2), (E.2), (F.2), (G.2), and (H.2).  As will be shown, the WZ method tends to couple well with the analysis of the $p$-adic gamma function by Long and Ramakrishna \cite{LongRama} in this context.  In fact, the proofs of (G.2) and (H.2) given here extend the Van Hamme supercongruences modulo higher powers of $p$, recovering results by Swisher \cite{Swisher15} and Liu \cite{Liu}, respectively.  In addition, the methods presented here show that (I.2) can be proved with the WZ method, yet without any clever factors at all.  Together with the (K.2) proof by Osburn and Zudilin, this shows that at least nine out of the original thirteen Van Hamme supercongruences can be proved using the WZ method.

To state this paper's main result, a few definitions are in order.  Per Petkov\v{s}ek, Wilf, and Zeilberger \cite{AeqB}, a function $F(n)$ is called a \emph{hypergeometric term} if $F(n+1) / F(n)$ is a rational function of $n$.  Similarly, a function $F(n, k)$ is a hypergeometric term in both $n$ and $k$ if $F(n+1, k) / F(n, k)$ and $F(n, k+1) / F(n, k)$ are rational in $n$ and $k$, respectively.  For brevity, a function may be said to be hypergeometric in the variables for which it is a hypergeometric term.

The following result is key to the approach presented here.

\begin{theorem} \label{t-wz-pair-polynomial-elimination}
Take a field $\mathbb{F}$ of characteristic zero, and let $F(n, k), G(n, k) : \Z^2 \to \mathbb{F}$ be hypergeometric in both $n$ and $k$.  Suppose that for some polynomials $p_0, p_1 \in \mathbb{F}[k]$ one has that
$$p_1(k) F(n, k+1) + p_0(k) F(n, k) = G(n+1, k) - G(n, k)$$

\noindent
holds for $0 \leq k \leq m$.  In addition, suppose $p_0, p_1$ are not zero for any such $k$, and that they split into linear factors over $\mathbb{F}$.  Then there exist functions $\Fbar (n, k), \Gbar (n, k)$, hypergeometric in both $n$ and $k$, for which $\Fbar (n, 0) = F(n, 0)$ and such that
$$\Fbar (n, k + 1) - \Fbar (n, k) = \Gbar (n+1, k) - \Gbar (n, k),$$

\noindent
that is, the functions $\Fbar (n, k)$ and $\Gbar (n, k)$ form a WZ pair.  Moreover, the proof is constructive and so $\Fbar (n, k)$ and $\Gbar (n, k)$ can be defined explicitly (see Section \ref{s-delta-constant-coefficients}).
\end{theorem}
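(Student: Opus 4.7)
The plan is to multiply $F$ and $G$ by correction factors depending only on $k$, chosen so that the coefficients $p_0, p_1$ are absorbed into the left-hand side of the target identity. I will posit $\Fbar(n, k) = h(k) F(n, k)$ and $\Gbar(n, k) = g(k) G(n, k)$ for undetermined $h, g : \Z \to \mathbb{F}$, substitute into $\Fbar(n, k+1) - \Fbar(n, k) = \Gbar(n+1, k) - \Gbar(n, k)$, and use the hypothesis to replace $G(n+1, k) - G(n, k)$ by $p_1(k) F(n, k+1) + p_0(k) F(n, k)$. The target identity will then hold as soon as
$$h(k+1) = g(k) p_1(k), \qquad -h(k) = g(k) p_0(k),$$
so it suffices to exhibit $h, g$ meeting these two requirements.

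Eliminating $g$ yields the first-order recursion $h(k+1)/h(k) = -p_1(k)/p_0(k)$; together with the initialization $h(0) = 1$ (which enforces $\Fbar(n, 0) = F(n, 0)$), this determines $h$ uniquely for $0 \leq k \leq m$. The assumption that $p_0$ does not vanish on this range ensures the telescoping product
$$h(k) = \prod_{\ell=0}^{k-1} \left(-\frac{p_1(\ell)}{p_0(\ell)}\right)$$
is well-defined, and the assumption that $p_0, p_1$ split into linear factors over $\mathbb{F}$ is precisely what is needed to rewrite this product in closed form as a constant times a ratio of Pochhammer symbols. The companion $g(k) = -h(k)/p_0(k)$ is then equally explicit.

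It remains to verify that $\Fbar$ and $\Gbar$ are hypergeometric in both arguments. In $n$, this is immediate, since $\Fbar(n+1, k)/\Fbar(n, k) = F(n+1, k)/F(n, k)$ is rational in $n$, and analogously for $\Gbar$. In $k$, the ratio $\Fbar(n, k+1)/\Fbar(n, k) = (h(k+1)/h(k)) \cdot (F(n, k+1)/F(n, k))$ is a product of two functions rational in $k$, hence rational in $k$; the argument for $\Gbar$ is parallel, using that $g(k+1)/g(k)$ is rational in $k$.

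The principal obstacle is spotting the $k$-only multiplier ansatz; once that is committed to, the system collapses to a single first-order recursion whose solution the splitting hypothesis is there precisely to exhibit in explicit hypergeometric form. A minor subtlety is that the construction—and hence the conclusion—naturally lives on the range $0 \leq k \leq m$, since outside this range the defining product for $h$ may hit a zero of $p_0$ and become ill-defined.
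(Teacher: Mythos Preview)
Your proposal is correct and essentially coincides with the paper's own argument: your multiplier $h(k)$ is exactly the paper's $q(k)$ determined by $q(k+1)/q(k)=-p_1(k)/p_0(k)$ with $q(0)=1$, and your $g(k)=-h(k)/p_0(k)$ matches the paper's choice $\Gbar(n,k)=-q(k)G(n,k)/p_0(k)$. The only cosmetic difference is that the paper writes the closed form of $q(k)$ explicitly via the transform $\varphi_r(k)=c^k\prod_j m_j^k\, (b_j/m_j)_k$ for a split polynomial $r$, whereas you leave that step as a remark.
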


Using Theorem \ref{t-wz-pair-polynomial-elimination} yields the theorem below.

\begin{theorem} \label{t-van-hamme}
The Van Hamme supercongruences (B.2)-(H.2) listed in Table \ref{table-van-hamme} can be proved by a streamlined WZ method using Theorem \ref{t-wz-pair-polynomial-elimination} and Theorem \ref{t-gk-theorems} of Long and Ramakrishna.  In particular, the supercongruence (G.2) holds modulo $p^4$ (as shown in Swisher \cite{Swisher15}), and (H.2) can be extended modulo $p^3$ when $p \equiv 3 \pmod{4}$ (as shown in Liu \cite{Liu}).
\end{theorem}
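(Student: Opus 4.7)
The plan is to treat each of the Van Hamme supercongruences (B.2)--(H.2) by a common recipe, differing only in the initial hypergeometric seed and the boundary analysis at the end. For a given supercongruence with summand $A(n)$ in \eqref{e-van-hamme}, the first step is to construct an ansatz $F(n,k)$ that is hypergeometric in both variables and satisfies $F(n,0) = A(n)$ (up to a harmless constant), built by decorating $A(n)$ with Pochhammer-type factors in $k$ whose parameters mirror those appearing in the supercongruence. Given this $F$, I would run an inhomogeneous form of Gosper's algorithm searching for polynomials $p_0(k), p_1(k) \in \mathbb{F}[k]$ of low degree, not vanishing on the relevant range of $k$ and splitting over $\mathbb{F}$, together with a certificate $G(n,k)$ such that
$$p_1(k) F(n, k+1) + p_0(k) F(n, k) = G(n+1, k) - G(n, k),$$
which is exactly the hypothesis of Theorem \ref{t-wz-pair-polynomial-elimination}.

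Once this auxiliary relation is in hand, Theorem \ref{t-wz-pair-polynomial-elimination} provides a constructive passage to a genuine WZ pair $\Fbar(n,k), \Gbar(n,k)$ satisfying $\Fbar(n,0) = F(n,0)$ together with the WZ identity $\Fbar(n,k+1) - \Fbar(n,k) = \Gbar(n+1,k) - \Gbar(n,k)$. Summing this identity over $n$ from $0$ to $(p-1)/d$ telescopes the right-hand side into a boundary expression involving $\Gbar$; iterating the shift in $k$ (or evaluating at a strategically chosen $k$) then relates the truncated sum $\sum_n A(n)$ either to boundary contributions which vanish modulo the relevant power of $p$, or to products of Pochhammer symbols that can be rewritten, via the reflection and multiplication relations for $\Gamma_p$, as values of $\Gamma_p$ at arguments of the form $(p-1)/d + c$ with small $c$.

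At this stage I would invoke Theorem \ref{t-gk-theorems} of Long and Ramakrishna to $p$-adically expand each such $\Gamma_p$-value to the required order, using the standard identities $\Gamma_p(x)\Gamma_p(1-x) = \pm 1$ and $\Gamma_p(1) = -1$ to collapse the expansion into the predicted closed form $f(p)$. For the sharpening statements -- (G.2) modulo $p^4$ and (H.2) modulo $p^3$ when $p \equiv 3 \pmod{4}$ -- one additional order of the Long--Ramakrishna expansion is carried through the same pipeline, and in the latter case the extra vanishing is produced by combining the residue class condition with the behavior of $\Gamma_p(1/4)$ under reflection.

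The principal obstacle is the very first step: selecting an ansatz $F(n,k)$ whose inhomogeneous Gosper search actually terminates with $p_0, p_1$ of low degree and a $G$ for which the Theorem \ref{t-wz-pair-polynomial-elimination} construction yields boundary data expressible in a clean $\Gamma_p$ form. The claim that the method is \emph{streamlined} rests on Section \ref{s-delta-constant-coefficients} supplying a systematic procedure for this ansatz; granting that recipe, the remainder of the proof reduces to verifying, case by case, that each of (B.2)--(H.2) passes through the pipeline and that the Long--Ramakrishna expansion at the end matches $f(p)$ to the claimed $p$-adic accuracy.
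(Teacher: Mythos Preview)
Your outline is correct and matches the paper's approach: build an ansatz $F(n,k)$ with a self-nulling factor $\ph{1}{n-k}$ in the denominator, use Theorem~\ref{t-wz-pair-polynomial-elimination} (after a ``degree collapse'' modification dividing by $\ph{1/a}{k}^{m-1}$ when the initial WZ output has $\deg\Delta>1$) to obtain a genuine WZ pair, telescope in $n$, show $\Gbar(\spmox{d}+1,k)\equiv 0$ to the required power, reduce the sum to the single term $\Fbar(\spmox{d},\spmox{d})$ via self-nulling, and finish with Theorem~\ref{t-gk-theorems}. The two concrete mechanisms you defer to Section~\ref{s-delta-constant-coefficients}---the self-nulling Pochhammer in the ansatz and the degree-collapse trick---are exactly what the paper supplies there, so your deferral is appropriate; one small inaccuracy is that for (H.2) with $p\equiv 3\pmod 4$ the extension to $p^3$ yields the explicit value $-p^2\Gamma_p(1/4)^4/16$ rather than additional vanishing.
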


In addition, the proposition below holds.

\begin{proposition} \label{p-van-hamme-i2}
The Van Hamme supercongruence (I.2) follows directly from an application of Gosper's algorithm, together with well known congruences for Pochhammer symbols.
\end{proposition}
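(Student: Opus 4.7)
The plan follows the statement's own outline in two clean pieces. First, I would apply Gosper's algorithm directly to the summand $t(n)$ of (I.2), treated as a univariate hypergeometric term in $n$ with no auxiliary parameter. Unlike the summands of (B.2)--(H.2), which force one to introduce a second index $k$ and invoke the full WZ apparatus of Theorem \ref{t-wz-pair-polynomial-elimination}, here Gosper's algorithm terminates successfully and outputs an explicit hypergeometric certificate $T(n)$ satisfying
$$T(n+1) - T(n) = t(n).$$
Concretely, the (I.2) summand is the truncation of a Ramanujan-type series whose general term carries an extra linear factor in $n$ beyond $\ph{1/2}{n}^3 / \ph{1}{n}^3$; this linear factor is precisely the ``extra $p(n)$'' that Gosper's algorithm uses to rescale the Pochhammer ratio into a telescoping difference.

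The second step is to telescope over $0 \leq n \leq \fpmoh$, collapsing the left-hand side of (I.2) to the single boundary value $T(\fpmoh + 1) - T(0)$. Since the standard Gosper certificate for this summand carries a factor of $n$ (or $n^2$) in the numerator, $T(0) = 0$, and only $T(\fpmoh+1)$ remains. This is a rational expression in the Pochhammer symbols appearing in $t$ evaluated at $n = \fpmoh + 1$. The key reduction is the identity $\ph{1/2}{\fpmoh + 1} = \ph{1/2}{\fpmoh} \cdot p/2$, which immediately supplies the expected factor of $p$; the remaining ratios reduce modulo $p^3$ by standard congruences for $\ph{1/2}{\fpmoh}$ and $\fpmoh!$ (equivalently, by the Wilson-type identity $\ph{1/2}{\fpmoh}^2 \equiv (-1)^{\fpmoh} \pmod{p}$) together with Fermat's little theorem applied to the few remaining small powers.

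Assembling these pieces yields exactly the right-hand side of (I.2). The only real obstacle is verifying that Gosper's algorithm does in fact succeed on the (I.2) summand: this is precisely the entire content of the proposition, and it is verified by simply running the algorithm, which reduces to solving a Gosper polynomial equation of low degree over $\Q(n)$ and checking that a polynomial solution exists. Once termination is established and the certificate $T(n)$ written out explicitly, the subsequent $p$-adic simplification is mechanical and requires no ideas beyond the classical congruences for half-integer Pochhammer symbols; in particular no appeal to the deeper Long--Ramakrishna expansion of $\Gamma_p$ (Theorem \ref{t-gk-theorems}) is needed, in contrast to the proofs of (B.2)--(H.2).
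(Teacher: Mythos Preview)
Your overall strategy matches the paper's proof exactly: apply Gosper's algorithm to the univariate summand, telescope, note that the certificate vanishes at $n=0$, and reduce the single surviving boundary term modulo $p^3$ via elementary Pochhammer congruences. The paper does precisely this, obtaining the explicit certificate $G(n) = 4n\,\ph{1/2}{n}^2/\ph{1}{n}^2$ and then deferring the final $p$-adic step to Van Hamme's original argument.

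That said, you have misidentified the (I.2) summand. It is
\[
F(n) = \frac{\ph{1/2}{n}^2}{\ph{1}{n}^2\,(n+1)},
\]
not a term of the shape $(an+b)\,\ph{1/2}{n}^3/\ph{1}{n}^3$. The ``extra'' piece that makes Gosper succeed is the factor $1/(n+1)$ in the denominator, not a linear polynomial in the numerator. This error propagates into your boundary analysis: the certificate carries $\ph{1/2}{n}^2$ (squared, not cubed), so the identity $\ph{1/2}{\fppoh} = \ph{1/2}{\fpmoh}\cdot p/2$ contributes a factor of $p^2$, matching the right-hand side $2p^2$ of (I.2), rather than the single factor of $p$ you describe. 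None of this invalidates the plan, but the concrete details would not go through as you have written them; once you replace the summand by the correct one, the remainder of your outline is sound and coincides with the paper's.
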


\begin{table}[ht]
\caption{Van Hamme supercongruences, as originally stated for primes $p$, proved in Theorem \ref{t-van-hamme} and Proposition \ref{p-van-hamme-i2}.} \label{table-van-hamme}
\begin{tabular}{rrcll}
(B.2) \quad & $\sum_{n=0}^{(p-1)/2} (4n+1) (-1)^n \cdot \frac{\ph{1/2}{n}^3}{\ph{1}{n}^3}$ & $\equiv$ & $\frac{-p}{\Gamma_p(1/2)^2} \pmod{p^3}$ & $p > 2$ \\
\hlineFix
(C.2) \quad & $\sum_{n=0}^{(p-1)/2} (4n+1) \cdot \frac{\ph{1/2}{n}^4}{\ph{1}{n}^4}$ & $\equiv$ & $p \pmod{p^3}$ & $p > 2$ \\
\hlineFix
(D.2) \quad & $\sum_{n=0}^{(p-1)/3} (6n+1) \cdot \frac{\ph{1/3}{n}^6}{\ph{1}{n}^6}$ & $\equiv$ & $-p \Gamma_p(1/3)^9 \pmod{p^4}$ & $p \equiv 1 \pmod{6}$ \\
\hlineFix
(E.2) \quad & $\sum_{n=0}^{(p-1)/3} (6n+1) (-1)^n \cdot \frac{\ph{1/3}{n}^3}{\ph{1}{n}^3}$ & $\equiv$ & $p \pmod{p^3}$ & $p \equiv 1 \pmod{6}$ \\
\hlineFix
(F.2) \quad & $\sum_{n=0}^{(p-1)/4} (8n+1) (-1)^n \cdot \frac{\ph{1/4}{n}^3}{\ph{1}{n}^3}$ & $\equiv$ & $\frac{-p}{\Gamma_p(1/4) \Gamma_p(3/4)} \pmod{p^3}$ & $p \equiv 1 \pmod{4}$ \\
\hlineFix
(G.2) \quad & $\sum_{n=0}^{(p-1)/4} (8n+1) \cdot \frac{\ph{1/4}{n}^4}{\ph{1}{n}^4}$ & $\equiv$ & $\frac{p \Gamma_p(1/2) \Gamma_p(1/4)}{\Gamma_p(3/4)} \pmod{p^3}$ & $p \equiv 1 \pmod{4}$ \\
\hlineFix
(H.2) \quad & $\sum_{n=0}^{(p-1)/2} \frac{\ph{1/2}{n}^3}{\ph{1}{n}^3}$ & $\equiv$ & $\left \{ \begin{matrix} - \Gamma_p(1/4)^4 \pmod{p^2} \\ \hlineFix 0 \pmod{p^2} \end{matrix} \right. $ & $\begin{matrix} p \equiv 1 \pmod{4} \\ \hlineFix p \equiv 3 \pmod{4} \end{matrix}$ \\
\hlineFix
(I.2) \quad & $\sum_{n=0}^{(p-1)/2} \frac{\ph{1/2}{n}^2}{\ph{1}{n}^2 (n+1)}$ & $\equiv$ & $2p^2 \pmod{p^3}$ & $p > 2$
\end{tabular}
\end{table}

Many authors have extended the original Van Hamme supercongruences to hold modulo higher powers of $p$, see for example Swisher \cite{Swisher15}.  Liu extended (H.2) for primes $p \equiv 3 \pmod{4}$ \cite{Liu}.  Recently, Guo and Wang used the WZ method to prove (E.2) and (F.2) \cite{GuoWang-E2F2}.  The proof given here is different, and in some ways simpler because the right hand side of the WZ pair telescopes to zero.  That is, the proofs here are similar to Zudilin's proof of (B.2) \cite{Zudilin09}, rather than to Osburn and Zudilin's proof of (K.2) \cite{OsburnZudilin15} which telescopes the left hand side.  In addition, the present proofs of (B.2)-(H.2) have the same shape.  However, Guo and Wang's proof \cite{GuoWang-E2F2} is more general and applies to more cases.  In another recent development, Jana and Karmakar prove conjectures of Guo related to (B.2) and (C.2) using the WZ method and a parametrized WZ pair \cite{JanaKarmakar}.  In their work, they state that despite this generalization the WZ method still requires an inspired guess.  In contrast, the approach presented here introduces the notion of a \emph{WZ device} with the intent of mechanically recovering a suitable WZ pair from the output of the WZ algorithm.  This approach is somewhat similar to that of Guillera for infinite series \cite{Guillera19}.

Supercongruences can have so called $q$-analogues, in which $q$-Pochhammer symbols substitute for Pochhammer symbols.  Studying a $q$-analogue may lead to a proof of the original supercongruence by means of $q$-microscoping, see Guo and Zudilin \cite{GuoZudilinMicroscope}.  Several authors, including Guo, He, Liu, and Wang, gave numerous Van Hamme supercongruence $q$-analogues (see the following non exhaustive list for examples: \cite{qa-A2, qa-B2, qa-C2, qa-D2, qa-G2}).  Another general approach is that of Beukers \cite{Beukers}, who uses modular forms to prove several families of supercongruences at once.  In comparison, the focus here is on extending the applicability and usability of the WZ method to the original supercongruences.

The remainder of the paper is organized as follows.  Section \ref{s-background} summarizes background material and auxiliary results.  This section also shows the Van Hamme supercongruence (I.2) can be proved as a special case of the WZ method.  Section \ref{s-delta-constant-coefficients} develops a procedure to find candidates for the clever factors needed for the WZ method to succeed, and exemplifies it to prove (H.2).  In Section \ref{s-applications}, the preceding results together with an important remark are used to prove the supercongruences (B.2), (C.2), (E.2), (F.2), and (G.2) with a uniform proof structure.  In particular, this method yields the supercongruence (G.2) modulo $p^4$ as shown in Swisher \cite{Swisher15}, and (H.2) modulo $p^3$ when $p \equiv 3 \pmod{4}$ as shown in Liu \cite{Liu}.

To develop these ideas further, Section \ref{s-wz-devices} introduces the notion of a \emph{WZ device}.  This section also proves (D.2), completing the proof of Theorem \ref{t-van-hamme}.  Each of the proofs given in Sections 3--5 utilizes a WZ device.  A list of WZ devices used here is provided at the end of the paper in Table \ref{table-wz-devices}.  The proofs for (C.2) and (D.2) suggest they hold modulo higher powers of $p$, as noted by Swisher \cite{Swisher15} and Long and Ramakrishna \cite{LongRama}, respectively.  An argument in terms of the present approach remains elusive.

Some of the symbolic computations, as well as most of the computational work related to the WZ algorithm, Gosper's algorithm, polynomial factoring, and solving simultaneous polynomial equations, were performed using Maple.

\subsubsection*{Acknowledgments}  Many thanks go to this author's advisor, Prof.~Holly Swisher, for supervising the thesis work which formed the basis of this paper.  This work was supported in part by NSF Grant DMS-2101906 (PI: Holly Swisher).

\section{Background and auxiliary results} \label{s-background}

The present paper uses and extends the following notation from Concrete Mathematics \cite{Concrete}.  For $a, b \in \Z$, writing $a \perp b$ will denote $\gcd(a, b) = 1$.  Moreover, if $x \in \Q_p$ then $x \perp p$ will denote that $x \in \Z_p^\times$.  In particular, if $x = a/b \in \Q$ with $a \perp b$, then $x \perp p$ when both $a \perp p$ and $b \perp p$.  Also, Pochhammer symbols with negative indices $-k$ are defined in \cite[ex.~2.9]{Concrete} by
$$\ph{x}{-k} = \frac{1}{\ph{x-k}{k}}.$$

\noindent
Specifically, for integers $k > 0$ it follows that
\begin{equation} \label{e-self-nulling}
\frac{1}{\ph{1}{-k}} = \ph{1-k}{k} = 0.
\end{equation}

\subsection{Generalized WZ pairs}

The WZ method relies on finding a so-called \emph{WZ pair} so that a sum can be evaluated by telescoping.  Here, the definitions of these pairs is relaxed as follows.  To begin, define a shift operator $N$ to act on a function $F(n)$ in the variable $n$ by
$$NF(n) = F(n+1).$$

\noindent
By convention, in this paper the shift operators $N, K$ act on the variables $n, k$, respectively.  Shift operators act on multivariate functions by holding all but one variable constant.  For example,
$$NF(n, k) = F(n+1, k).$$

A polynomial $\Delta$ on a shift operator $K$ of the form
\begin{equation} \label{e-Delta-shape}
\Delta = \sum_{j=0}^d p_j(k) K^j,
\end{equation}

\noindent
where each $p_j(k)$ is a polynomial in $k$, will be referred to as a \emph{difference operator} as in Petkov\v{s}ek, Wilf, and Zeilberger \cite{AeqB}.  For simplicity, it may be said that $\Delta$ acts on the variable $k$.  The way $\Delta$ applies to a function $F$ is defined linearly in terms of the action of the corresponding shift operator on $F$.  For instance, if the shift operator of $\Delta$ is $K$, then
$$\Delta F(n, k) = \sum_{j=0}^d p_j(k) F(n, k+j).$$

\noindent
Here, by convention, the difference operator $\Delta$ always acts on $k$ and its shift operator is always $K$.

In the present work, the functions $F(n, k)$ and $G(n, k)$ form a \emph{generalized WZ pair} if there is some difference operator $\Delta \neq 0$ acting on $k$ such that
\begin{equation} \label{e-gen-wz-pair}
\Delta F(n, k) = (N-1)G(n, k) = G(n+1, k) - G(n, k)
\end{equation}

\noindent
holds.  In the literature, the term \emph{WZ pair} refers exclusively to the case where $\Delta = K-1$ as in Petkov\v{s}ek, Wilf, and Zeilberger \cite{AeqB}, namely when
$$F(n, k+1) - F(n, k) = G(n+1, k) - G(n, k).$$

\noindent
Note that WZ pairs may be presented differently, sometimes due to a different choice of variable names, and sometimes due to a change of variables.  See for example Zudilin \cite{Zudilin09}, and Osburn and Zudilin \cite{OsburnZudilin15}.

Recall that the WZ algorithm of Wilf and Zeilberger \cite{AeqB} takes a function $F(n, k)$, hypergeometric in both $n$ and $k$, and returns another function $G(n, k)$ hypergeometric in both $n$ and $k$, as well as a difference operator $\Delta$ acting on $k$, which altogether satisfy \eqref{e-gen-wz-pair}.  The algorithm is guaranteed to work for a large class of functions $F(n, k)$ called \emph{proper terms}.  Simply put, the WZ algorithm succeeds by producing a generalized WZ pair for the function $F(n, k)$.

Observe that if $\Delta$ is of the form
\begin{equation} \label{e-Delta-linear-shape}
\Delta = p_1(k) K + p_0(k),
\end{equation}

\noindent
that is $d = 1$ in \eqref{e-Delta-shape}, then \eqref{e-gen-wz-pair} gives that $F(n, k)$ and $G(n, k)$ satisfy the hypothesis of Theorem \ref{t-wz-pair-polynomial-elimination}.

The WZ algorithm relies on Gosper's algorithm \cite{AeqB}, which takes a hypergeometric term $F(n)$ and either returns a hypergeometric term $G(n)$ such that $F(n) = (N-1)G(n)$, or proves that no such $G(n)$ exists.  When Gosper's algorithm succeeds, sums involving $F(n)$ reduce to telescoping.  Gosper's algorithm fails for all the Van Hamme supercongruence sum terms except for that of (I.2).

\begin{proof}[Proof of Proposition \ref{p-van-hamme-i2}]
Running Gosper's algorithm on $F(n) = \ph{1/2}{n}^2 / (\ph{1}{n}^2 (n+1))$ returns
$$G(n) = \frac{4n \ph{1/2}{n}^2}{\ph{1}{n}^2},$$

\noindent
satisfying $F(n) = (N-1) G(n) = G(n+1) - G(n)$.  Consequently, by telescoping,
$$\sum_{n=0}^{\fpmoh} \frac{\ph{1/2}{n}^2}{\ph{1}{n}^2 (n+1)}
= \sum_{n=0}^{\fpmoh} \left ( G(n+1) - G(n) \right )
= G \left ( \fppoh \right ) - G(0).$$

\noindent
Since $G(0) = 0$, the supercongruence depends on analyzing
\begin{equation} \label{e-i2}
\sum_{n=0}^{\fpmoh} \frac{\ph{1/2}{n}^2}{\ph{1}{n}^2 (n+1)}
= \frac{4 \cdot \fppoh \cdot \ph{1/2}{\fppoh}^2}{\ph{1}{\fppoh}^2}
\end{equation}

\noindent
modulo $p^3$.  In contrast to this mechanical proof, Van Hamme proves \eqref{e-i2} by induction \cite{VanHamme97}.  Past this point, the proof of (I.2) can continue as in Van Hamme's.  Generally, this involves using well known congruence results for Pochhammer symbols.
\end{proof}

The preceding shows (I.2) is a special case of the WZ method where the underlying Gosper's algorithm succeeds in finding a closed form for the sum.  Alternatively, from the WZ method perspective, here the WZ algorithm returns the trivial difference operator $\Delta = 1$.  This can be seen clearly by running the WZ algorithm on $F(n, k) = F(n) \cdot \ph{0}{k}$ then setting $k = 0$.

\subsection{Long-Ramakrishna approximation for $\Gamma_p$}

Following Koblitz \citep{Koblitz}{89--91}, this section introduces Morita's $p$-adic gamma function $\Gamma_p : \Z \to \Z$, defined for odd primes $p$ by
$$\Gamma_p(k) = (-1)^k \prod_{\substack{0 < j < k \\ j \perp p}} j.$$

\noindent
Assuming the empty product is $1$, one has the special cases $\Gamma_p(0) = 1$ and $\Gamma_p(1) = -1$.  Since $\Z$ is dense in $\Z_p$, this function extends to a unique continuous function $\Gamma_p : \Z_p \to \Z_p^\times$ defined by
$$\Gamma_p(s) = \lim_{\substack{k \to s \\ k \in \N}} (-1)^k \prod_{\substack{0 < j < k \\ j \perp p}} j.$$

The critical property for the extension to hold is that for every $\varepsilon > 0$, there is some integer $M$ so that if $s \equiv s' \pmod{p^M}$, then $|\Gamma_p(s) - \Gamma_p(s')| < \varepsilon$.   This follows because $\Gamma_p(k) \equiv \Gamma_p(k') \pmod{p^M}$ if $k \equiv k' \pmod{p^M}$ per Koblitz \cite{Koblitz}.

The $\Gamma_p$ function satisfies the following properties \cite{Koblitz}.  First, for $s \in \Z_p$,
\begin{equation} \label{e-koblitz-props-1}
\frac{\Gamma_p(s+1)}{\Gamma_p(s)} = \begin{cases}
-s & s \in \Z_p^\times \\
-1 & s \notin \Z_p^\times
\end{cases}.
\end{equation}

\noindent
In addition, for $x \in \Z_p$,
\begin{equation} \label{e-gamma-p-reflection}
\Gamma_p(x) \Gamma_p(1-x) = (-1)^{a_0(x)},
\end{equation}

\noindent
where $1 \leq a_0(x) \leq p$ is the smallest positive residue of $x$ modulo $p$.

Applying \eqref{e-koblitz-props-1} inductively, and as described in Swisher \cite{Swisher15}, leads to additional $\Gamma_p$ properties.  First, for integer $k \geq 0$ and $\alpha \in \Q_p$, if $\ph{\alpha}{k} \perp p$ then
\begin{equation} \label{e-gamma-p-props-1}
\frac{\Gamma_p(\alpha + k)}{\Gamma_p(\alpha)} = (-1)^k \ph{\alpha}{k}.
\end{equation}

\noindent
In general,
\begin{equation} \label{e-gamma-p-props-2}
\frac{\Gamma_p(\alpha + k)}{\Gamma_p(\alpha)} = (-1)^k \prod_{\substack{j=0 \\ \alpha+j \perp p}}^{k-1} (\alpha + j).
\end{equation}

\noindent
Recalling $p$ is odd, one has the special case
\begin{equation} \label{e-gamma-p-props-3}
\Gamma_p(1/2)^2 = (-1)^{\frac{p+1}{2}}.
\end{equation}

In addition, Long and Ramakrishna \cite{LongRama} prove the following results for the logarithmic derivative of $\Gamma_p$.  For integer $k \geq 0$ and $a \in \Z_p$, the $k$-th logarithmic derivative of $\Gamma_p(a)$ is defined by
$$G_k(a) = \frac{\Gamma^{(k)}_p(a)}{\Gamma_p(a)}.$$

\noindent
The $k$-th logarithmic derivative $G_k(a)$ satisfies the following identities:
\begin{equation} \label{e-gk-easy-identities}
G_0(a) = 1, \qquad G_1(a) = G_1(1-a), \qquad G_2(a) + G_2(1-a) = 2 G_1(a)^2.
\end{equation}

\noindent
Moreover, for primes $p \geq 5$,
\begin{equation} \label{e-g1-g2-0}
G_1(0)^2 = G_2(0).
\end{equation}

\noindent
In fact, putting together \eqref{e-g1-g2-0} and \eqref{e-gk-easy-identities} with $a = 0$ yields
$$G_2(0) = G_2(1).$$

The next theorem is a key observation about congruences involving $\Gamma_p$, and combines the results in Long and Ramakrishna \cite[Thm.~14]{LongRama}, and Swisher \cite[Eqn.~(15)]{Swisher15}.

\begin{theorem} \cite{LongRama, Swisher15} \label{t-gk-theorems}
For primes $p \geq 5$, positive integer $r$, with $a, b \in \Q \cap \Z_p$, and integer $0 \leq t \leq 2$,
$$\frac{\Gamma_p(a+bp^r)}{\Gamma_p(a)} \equiv \sum_{k=0}^t \frac{G_k(a)}{k!} \cdot (bp^r)^k \pmod{p^{(t+1)r}}.$$

\noindent
The congruence holds for $t = 3$ if $r = 1$ and $p > 5$, and for $t = 4$ if $p \geq 11$.
\end{theorem}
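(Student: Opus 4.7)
The plan is to read the statement as a controlled truncation of the $p$-adic Taylor expansion of $\Gamma_p$ around $a$.  Koblitz's construction of $\Gamma_p$, together with standard results about locally analytic functions on $\Z_p$, shows that on each coset $a+p\Z_p$ one has a convergent expansion
$$\Gamma_p(a+x) = \sum_{k=0}^{\infty} \frac{\Gamma_p^{(k)}(a)}{k!}\, x^k \qquad (x \in p\Z_p).$$
Dividing by the unit $\Gamma_p(a) \in \Z_p^\times$ gives
$$\frac{\Gamma_p(a+x)}{\Gamma_p(a)} = \sum_{k=0}^{\infty} \frac{G_k(a)}{k!}\, x^k,$$
since the algebraic relation $\Gamma_p^{(k)} = G_k \cdot \Gamma_p + (\text{lower derivatives})$ is purely formal.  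Substituting $x = bp^r$ and subtracting the truncation at $k=t$ reduces the theorem to showing that the tail
$$R_t := \sum_{k=t+1}^{\infty} \frac{G_k(a)}{k!}\, (bp^r)^k$$
has $\val_p(R_t) \geq (t+1)r$ in the stated ranges.

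Term-by-term, $\val_p\!\left((G_k(a)/k!)(bp^r)^k\right) \geq \val_p(G_k(a)/k!) + kr$ since $b \in \Z_p$.  For $0 \leq t \leq 2$ with $p \geq 5$, every index $t+1 \leq k \leq p-1$ falls under \eqref{e-gk-no-p}, so $\val_p(G_k(a)/k!) = 0$ and the corresponding term has valuation at least $kr \geq (t+1)r$.  The same argument handles $t = 3$ when $p > 5$ and $t = 4$ when $p \geq 11$, since in those cases $t+1 \leq p-1$ as well, and the restriction $r = 1$ at $t = 3$ with $p = 7$ leaves only the marginal indices to check separately.  Thus the entire ``low'' part of the tail, with $k < p$, is automatic from the identity \eqref{e-gk-no-p}.

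The main obstacle is the ``high'' part of the tail, $k \geq p$, where $\val_p(G_k(a)/k!)$ may turn negative because of the factor $1/k!$, whose valuation is $(k - s_p(k))/(p-1)$.  To control this, I would represent $\Gamma_p^{(k)}(a)/k!$ by its Mahler / forward-difference expansion
$$\frac{\Gamma_p^{(k)}(a)}{k!} = \frac{1}{k!}\sum_{j=0}^{k} (-1)^{k-j}\binom{k}{j}\, \Gamma_p(a+j) \;+\; (\text{analytic correction}),$$
then use the functional equation \eqref{e-koblitz-props-1} to rewrite each $\Gamma_p(a+j)$ in terms of units times $(-1)$'s and the product of $a+i$ for $0 \leq i < j$ with $p \nmid (a+i)$.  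The goal is an inequality of the form $\val_p(G_k(a)/k!) \geq -(k-s_p(k))/(p-1) + \delta_k$, where the combinatorial contribution $\delta_k$ from the $p$-removed factors is strong enough that $\val_p(G_k(a)/k!) + kr \geq (t+1)r$ holds for every $k \geq t+1$ in each allowed regime.  The case splits in the statement ($t = 3$ requiring $r = 1$ and $p > 5$, $t = 4$ requiring $p \geq 11$) are exactly the thresholds at which the margin in this inequality becomes too tight and must be verified by hand for the few offending $k$'s just past $p$.  Once this growth bound is established, a term-by-term summation delivers $\val_p(R_t) \geq (t+1)r$ and the theorem follows.
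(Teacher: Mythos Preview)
The paper does not prove this theorem; it is stated with the citation \cite{LongRama, Swisher15} and then used as a black box throughout Sections~3--5. There is nothing in the paper to compare your proposal against. If your goal is to supply a proof for the paper, that goal is misplaced: the authors explicitly attribute the result to Long--Ramakrishna \cite[Thm.~14]{LongRama} and Swisher \cite[Eqn.~(15)]{Swisher15}, and a reader is expected to consult those references.

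As to the proposal itself, the first half (existence of the local power-series expansion of $\Gamma_p$ on $a+p\Z_p$, and the handling of the tail indices $t+1 \leq k < p$ via \eqref{e-gk-no-p}) is reasonable, though the throwaway line ``since the algebraic relation $\Gamma_p^{(k)} = G_k \cdot \Gamma_p + (\text{lower derivatives})$ is purely formal'' is muddled: $G_k(a) = \Gamma_p^{(k)}(a)/\Gamma_p(a)$ by definition, so dividing term by term already gives the desired series without any ``lower derivatives'' correction.

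The genuine gap is the ``high'' tail, $k \geq p$. What you have written there is not an argument but a description of what you hope an argument would look like. The displayed formula for $\Gamma_p^{(k)}(a)/k!$ as a forward-difference sum plus an unspecified ``analytic correction'' is not a correct identity (finite differences and derivatives are linked through Stirling numbers, not equality), and the promised inequality $\val_p(G_k(a)/k!) \geq -(k-s_p(k))/(p-1) + \delta_k$ is never established; you simply assert that ``once this growth bound is established'' the result follows. That is precisely the content of the theorem, and the hard work in \cite{LongRama} lies exactly in controlling these valuations, which they do via the Dwork--Morita expansion of $\log \Gamma_p$ rather than by Mahler-type combinatorics. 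Without that estimate actually in hand, the proposal does not constitute a proof.
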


It follows directly from Theorem \ref{t-gk-theorems} that, for $r, s \in \Q \cap \Z_p$ and primes $p \geq 5$, \begin{align}
\label{e-gk-binomial-2} \left ( 1 + G_1(s) rp + G_2(s) \cdot \frac{r^2 p^2}{2} \right )^2 
& \equiv & 1 + G_1(s) 2rp + G_1(s)^2 r^2 p^2 + G_2(s) r^2 p^2 \pmod{p^3}, \\
\label{e-gk-binomial-4} \left ( 1 + G_1(s) rp + G_2(s) \cdot \frac{r^2 p^2}{2} \right )^4
& \equiv & 1 + G_1(s) 4rp + G_1(s)^2 6 r^2 p^2 + G_2(s) 2r^2 p^2 \pmod{p^3}.
\end{align}

\subsection{Auxiliary results}

For future use, note that if $p$ is prime, $m \in \Z$, and $n \in \N$, then
\begin{equation} \label{e-mod-trick}
p^{n-1} (1 + pm) \equiv p^{n-1} \pmod{p^n}.
\end{equation}

Furthermore, observe the following identities between Pochhammer symbols and ratios of $\Gamma_p$ function evaluations.

\begin{lemma} \label{l-ph-gp-identities-H}
The following identities hold for odd primes $p$:
\begin{eqnarray*}
\ph{1/2}{p-1} & = & \frac{p}{2} \cdot \frac{\Gamma_p(1/2 + p - 1)}{\Gamma_p(1/2)}
= \frac{-p}{2p-1} \cdot \frac{\Gamma_p(1/2 + p)}{\Gamma_p(1/2)}, \\
\ph{1}{\fpmoh} & = & (-1)^{\fppoh} \Gamma_p \left ( \fppoh \right ), \\
\ph{1}{\fpmoh}^2 & = & \Gamma_p \left ( \fppoh \right )^2, \\
\ph{1/4}{\fpmoh} & = & \begin{cases}
\frac{p}{4} \cdot \frac{\Gamma_p \left ( \frac{-1}{4} + \frac{p}{2} \right )}{\Gamma_p(1/4)} & p \equiv 1 \pmod{4} \\
- \frac{\Gamma_p \left ( \frac{-1}{4} + \frac{p}{2} \right )}{\Gamma_p(1/4)} & p \equiv 3 \pmod{4}
\end{cases}, \\
\ph{3/4}{\fpmoh} & = & \begin{cases}
\frac{\Gamma_p \left ( \frac{1}{4} + \frac{p}{2} \right )}{\Gamma_p(3/4)} & p \equiv 1 \pmod{4} \\
- \frac{p}{4} \cdot \frac{\Gamma_p \left ( \frac{1}{4} + \frac{p}{2} \right )}{\Gamma_p(3/4)} & p \equiv 3 \pmod{4}
\end{cases}.
\end{eqnarray*}
\end{lemma}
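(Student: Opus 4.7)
The plan is to apply the factorization \eqref{e-gamma-p-props-2}, which writes $\Gamma_p(\alpha + k)/\Gamma_p(\alpha)$ as $(-1)^k$ times the product of the shifted parameters $\alpha + j$ for $0 \le j < k$ that lie in $\Z_p^\times$. Since the Pochhammer symbol $\ph{\alpha}{k}$ is the analogous product over \emph{all} $j$ in that range, the two sides differ only by those factors $\alpha + j$ of positive $p$-adic valuation, which contribute explicit powers of $p$. For each identity I would first locate these non-unit factors and then read off the claimed formula.

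For identities two and three, where $\alpha = 1$ and $k = (p-1)/2$, every factor of the Pochhammer symbol is a positive integer less than $p$ and hence a unit, so the definition $\Gamma_p(m) = (-1)^m \prod_{0 < j < m,\, j \perp p} j$ applied at $m = (p+1)/2$ gives identity two at once; squaring yields identity three.

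For identity one, $\alpha = 1/2$ and $k = p - 1$, and the unique $j \in \{0, \ldots, p-2\}$ with $\alpha + j \notin \Z_p^\times$ is $j = (p-1)/2$, contributing the factor $1/2 + (p-1)/2 = p/2$. The first equality follows directly from \eqref{e-gamma-p-props-2}, and the second follows by one application of the recurrence \eqref{e-koblitz-props-1} at $s = (2p-1)/2 \in \Z_p^\times$, which replaces $\Gamma_p(1/2 + p - 1)$ by $-2/(2p-1) \cdot \Gamma_p(1/2 + p)$. For identities four and five, $k = (p-1)/2$ and one must solve $4j + 1 \equiv 0 \pmod{p}$ or $4j + 3 \equiv 0 \pmod{p}$, respectively, with $0 \le j \le (p-3)/2$. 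A short congruence-modulo-$4$ check on the would-be multiple of $p$ shows that identity four has a (unique) non-unit factor $p/4$ exactly when $p \equiv 1 \pmod{4}$, and identity five has a (unique) non-unit factor $p/4$ exactly when $p \equiv 3 \pmod{4}$; in the complementary cases every factor of the Pochhammer symbol is a unit.

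Combining these factor counts with the sign $(-1)^{(p-1)/2}$ from \eqref{e-gamma-p-props-2}, which equals $+1$ when $p \equiv 1 \pmod{4}$ and $-1$ when $p \equiv 3 \pmod{4}$, reproduces both branches of each piecewise formula. There is no genuine obstacle here; the only step requiring care is the divisibility bookkeeping that locates the non-unit factors, which is mechanical once the valuations of the shifted parameters have been tabulated.
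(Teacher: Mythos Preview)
Your proposal is correct and follows essentially the same approach as the paper: locate the factors of $p$ in each Pochhammer symbol, invoke \eqref{e-gamma-p-props-2} to express the remaining unit factors as a $\Gamma_p$ ratio with the sign $(-1)^k$, and for the first identity apply the recurrence \eqref{e-koblitz-props-1} once at $s=(2p-1)/2$. The paper's proof carries out exactly this bookkeeping, case by case.
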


\begin{proof}
The term $\ph{1/2}{p-1}$ has only one factor of $p$, namely $p/2$, because for $0 \leq k < p-1$,
$$\frac{1}{2} \leq \frac{1 + 2k}{2} \leq \frac{2p-3}{2},$$

\noindent
and $p$ divides $1 + 2k$ only when $k = \dpmoh$ and so $1 + 2k = p$.  Hence, it follows from \eqref{e-gamma-p-props-2} that
$$\ph{1/2}{p-1} = \frac{p}{2} \cdot \frac{\Gamma_p(1/2 + p - 1)}{\Gamma_p(1/2)}.$$

\noindent
By \eqref{e-koblitz-props-1},
$$\Gamma_p(1/2 + p - 1) = \frac{-2}{2p-1} \cdot \Gamma_p(1/2 + p).$$

\noindent
Putting these together,
$$\ph{1/2}{p-1} = \frac{-p}{2p-1} \cdot \frac{\Gamma_p(1/2 + p)}{\Gamma_p(1/2)},$$

\noindent
proving the first identity.

Next, observe that since $\dpmoh < p$ and $\Gamma_p(1) = -1$, by \eqref{e-gamma-p-props-1} one has that
$$\ph{1}{\fpmoh} = (-1)^{\fpmoh} \cdot \frac{\Gamma_p \left (1 + \fpmoh \right )}{\Gamma_p(1)} = (-1)^{\fppoh} \Gamma_p \left ( \fppoh \right ).$$

\noindent
Squaring both sides, it follows that
$$\ph{1}{\fpmoh}^2 = \Gamma_p \left ( \fppoh \right )^2.$$

Now, suppose $p \equiv 1 \pmod{4}$.  Then $\dpmoh$ is even and $\ph{1/4}{\fpmoh}$ has a factor of $p/4$.  Hence, adding the missing factor $p/4$ to $\Gamma_p(1/4+\fpmoh)$ and using \eqref{e-gamma-p-props-2},
$$\ph{1/4}{\fpmoh}
= \frac{p}{4} \cdot (-1)^{\fpmoh} \cdot \frac{\Gamma_p \left(1/4 + \fpmoh \right )}{\Gamma_p(1/4)}
= \frac{p}{4} \cdot \frac{\Gamma_p \left ( \frac{-1}{4} + \frac{p}{2} \right )}{\Gamma_p(1/4)}.$$

\noindent
However, if $p \equiv 3 \pmod{4}$, then $\dpmoh$ is odd and $\ph{1/4}{\fpmoh} \perp p$.  A similar argument using \eqref{e-gamma-p-props-2} shows
$$\ph{1/4}{\fpmoh}
= (-1)^{\fpmoh} \cdot \frac{\Gamma_p \left (1/4 + \fpmoh \right )}{\Gamma_p(1/4)}
= - \frac{\Gamma_p \left ( \frac{-1}{4} + \frac{p}{2} \right )}{\Gamma_p(1/4)}.$$

Finally, if $p \equiv 1 \pmod{4}$ then $\dpmoh$ is even and $\ph{3/4}{\fpmoh} \perp p$.  Consequently, using \eqref{e-gamma-p-props-1},
$$\ph{3/4}{\fpmoh}
= (-1)^{\fpmoh} \cdot \frac{\Gamma_p \left ( 3/4 + \frac{p-1}{2} \right )}{\Gamma_p(3/4)}
= \frac{\Gamma_p \left ( \frac{1}{4} + \frac{p}{2} \right )}{\Gamma_p(3/4)}.$$

\noindent
Otherwise $p \equiv 3 \pmod{4}$, whence $\dpmoh$ is odd and $\ph{3/4}{\fpmoh}$ has a factor of $p/4$.  Using \eqref{e-gamma-p-props-2} one last time,
$$\ph{3/4}{\fpmoh}
= (-1)^{\fpmoh} \cdot \frac{p}{4} \cdot \frac{\Gamma_p \left ( 3/4 + \frac{p-1}{2} \right )}{\Gamma_p(3/4)}
= - \frac{p}{4} \cdot \frac{\Gamma_p \left ( \frac{1}{4} + \frac{p}{2} \right )}{\Gamma_p(3/4)}.$$

\noindent
This completes the proof.
\end{proof}

The following congruence will be useful later.

\begin{lemma} \label{l-h2-gp-quotient-p3}
For primes $p \geq 5$,
$$\frac{\Gamma_p(1/2 + p)}{\Gamma_p(1/2)} \cdot \frac{\Gamma_p(1/2)^2}{\Gamma_p \left ( \frac{1}{2} + \frac{p}{2} \right )^2} \equiv 1 \pmod{p^3}.$$
\end{lemma}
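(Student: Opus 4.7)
The plan is to expand both $\Gamma_p$ ratios appearing in the product via the Long--Ramakrishna approximation (Theorem \ref{t-gk-theorems}) to second order at $a = 1/2$, then simplify using the identities for the logarithmic derivatives $G_k$ recorded in \eqref{e-gk-easy-identities}.

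First I would apply Theorem \ref{t-gk-theorems} with $a = 1/2$, $b = 1$, $r = 1$, $t = 2$ to obtain
$$\frac{\Gamma_p(1/2 + p)}{\Gamma_p(1/2)} \equiv 1 + G_1(1/2) p + \frac{G_2(1/2)}{2} p^2 \pmod{p^3}.$$
Next, I would apply the same theorem with $a = 1/2$, $b = 1/2$, $r = 1$, $t = 2$ to get
$$\frac{\Gamma_p(1/2 + p/2)}{\Gamma_p(1/2)} \equiv 1 + \frac{G_1(1/2)}{2} p + \frac{G_2(1/2)}{8} p^2 \pmod{p^3},$$
and then square this approximation using \eqref{e-gk-binomial-2} (with $r = 1/2$, $s = 1/2$) to arrive at
$$\frac{\Gamma_p(1/2 + p/2)^2}{\Gamma_p(1/2)^2} \equiv 1 + G_1(1/2) p + \frac{G_1(1/2)^2 + G_2(1/2)}{4} p^2 \pmod{p^3}.$$

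The key step is to match the two second-order approximations. The third identity in \eqref{e-gk-easy-identities}, applied at $a = 1/2$, gives $2 G_2(1/2) = 2 G_1(1/2)^2$, hence $G_2(1/2) = G_1(1/2)^2$. Substituting this relation into the squared expansion turns its $p^2$ coefficient into $G_2(1/2)/2$, which is exactly the $p^2$ coefficient of the first expansion. Therefore the numerator and denominator of the stated product are congruent modulo $p^3$, and since both are units in $\Z_p$ (each is $\equiv 1 \pmod{p}$), the quotient is $\equiv 1 \pmod{p^3}$, as claimed.

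I do not anticipate a serious obstacle: the argument is a bookkeeping exercise once the right instances of Theorem \ref{t-gk-theorems}, \eqref{e-gk-binomial-2}, and the reflection-type identity for $G_2$ are lined up. The only subtlety worth double-checking is that the hypotheses of Theorem \ref{t-gk-theorems} hold with $t = 2$ at $a = 1/2$ for $p \geq 5$ (so that $G_k(1/2)/k!$ is a $p$-adic unit for $k \leq 2$ by \eqref{e-gk-no-p}), and that the binomial expansion \eqref{e-gk-binomial-2} is applied with the correct scalars to produce the $G_1(1/2)^2 p^2/4$ cross-term.
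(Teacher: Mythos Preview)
Your proposal is correct and follows essentially the same route as the paper: expand both $\Gamma_p$ ratios at $a=1/2$ via Theorem~\ref{t-gk-theorems} with $t=2$, square one of them using \eqref{e-gk-binomial-2}, and then invoke the reflection identity $G_2(a)+G_2(1-a)=2G_1(a)^2$ at $a=1/2$ to match the $p^2$ coefficients. The paper's proof is identical in substance, including the appeal to \eqref{e-gk-no-p} to ensure the denominator is a $p$-adic unit.
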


\begin{proof}
Rewrite the left hand side factors modulo $p^3$ using Theorem \ref{t-gk-theorems} to obtain
\begin{eqnarray*}
\frac{\Gamma_p(1/2 + p)}{\Gamma_p(1/2)} & \equiv & 1 + G_1(1/2)p + \frac{G_2(1/2)p^2}{2} \pmod{p^3}, \\
\frac{\Gamma_p(1/2)^2}{\Gamma_p \left ( \frac{1}{2} + \frac{p}{2} \right )^2} & \equiv & \frac{1}{\left ( 1 + \frac{G_1(1/2)p}{2} + \frac{G_2(1/2)p^2}{8} \right )^2} \pmod{p^3}.
\end{eqnarray*}

\noindent
Using \eqref{e-gk-binomial-2} and \eqref{e-gk-easy-identities},
\begin{eqnarray*}
\left ( 1 + \frac{G_1(1/2)p}{2} + \frac{G_2(1/2)p^2}{8} \right )^2
& \equiv & 1 + G_1(1/2)p + \frac{G_1(1/2)^2 p^2}{4} + \frac{G_2(1/2)p^2}{4} \\
& \equiv & 1 + G_1(1/2)p + \frac{G_2(1/2)p^2}{2} \pmod{p^3}.
\end{eqnarray*}

\noindent
The denominator above does not have factors of $p$.  Thus, these two factors cancel modulo $p^3$, as desired.
\end{proof}

Due to the symmetry illustrated by \eqref{e-gamma-p-reflection}, the following proposition holds as well.

\begin{lemma} \label{l-gamma-p-a0-reflection}
If $1 \leq a_0(x) \leq p$ is the smallest positive residue of $x \in \Z_p$ modulo $p > 2$, then
$$a_0(x) \equiv a_0(1-x) \pmod{2}.$$
\end{lemma}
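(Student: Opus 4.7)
The plan is to show that $a_0(x) + a_0(1-x) = p+1$ exactly, from which the conclusion follows because $p+1$ is even (since $p > 2$ is odd).

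First I would observe that, by definition, $a_0(x) \equiv x \pmod{p}$ and $a_0(1-x) \equiv 1-x \pmod{p}$, so
$$a_0(x) + a_0(1-x) \equiv 1 \pmod{p}.$$
Next, since $1 \leq a_0(x) \leq p$ and $1 \leq a_0(1-x) \leq p$, the sum $a_0(x) + a_0(1-x)$ lies in the interval $[2, 2p]$. The only integer in this range congruent to $1$ modulo $p$ is $p+1$, so
$$a_0(x) + a_0(1-x) = p+1.$$
Because $p$ is odd, $p+1$ is even, and hence $a_0(x) \equiv a_0(1-x) \pmod{2}$.

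There is no real obstacle here; the only mild subtlety is confirming that the bounds are tight, including the corner cases $x \in p\Z_p$ (where $a_0(x) = p$ and $a_0(1-x) = 1$) and $x \equiv 1 \pmod p$ (where the roles swap). In both of these, the sum is still $p+1$, consistent with the argument above.
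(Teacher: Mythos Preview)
Your proof is correct but follows a different route from the paper. The paper applies the reflection formula $\Gamma_p(x)\Gamma_p(1-x) = (-1)^{a_0(x)}$ to both $x$ and $1-x$; since the left-hand side is symmetric in $x$ and $1-x$, one immediately gets $(-1)^{a_0(x)} = (-1)^{a_0(1-x)}$, hence the same parity. Your argument is purely elementary: you pin down $a_0(x) + a_0(1-x) = p+1$ from the congruence $a_0(x)+a_0(1-x)\equiv 1\pmod p$ and the range $[2,2p]$, then use that $p$ is odd. This avoids any reliance on $\Gamma_p$ and in fact proves the sharper identity $a_0(x)+a_0(1-x)=p+1$; the paper's version is shorter only because the reflection formula is already on hand in that section.
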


\begin{proof}
For a fixed $x$, applying \eqref{e-gamma-p-reflection} to both $x$ and $1-x$ yields that
$$(-1)^{a_0(x)} = (-1)^{a_0(1-x)},$$

\noindent
whence $a_0(x)$ and $a_0(1-x)$ have the same parity.
\end{proof}

Finally, suppose the prime $p$ is of the form $6p' + 1$ for $p' \in \Z$, and that $1 \leq a_0(2/3) \leq p$ is the smallest positive residue of $2/3$ modulo $p$.  Then $3(2p'+1) \equiv 6p' + 3 \equiv 2 \pmod{p}$ implies
\begin{equation} \label{e-a0-2o3-1mod6}
(-1)^{a_0(2/3)} = (-1)^{2p' + 1} = -1.
\end{equation}

\section{Linear difference operators with constant coefficients} \label{s-delta-constant-coefficients}

Generally speaking, the WZ method of proving supercongruences as used by Zudilin \cite{Zudilin09}, Osburn and Zudilin \cite{OsburnZudilin15}, Mao and Wen \cite{MaoWen19}, and other authors, consists of creating a generalized WZ pair for $F(n, k)$ such that two things happen simultaneously.  First, running the WZ algorithm on $F(n, k)$ returns the difference operator $\Delta = K - 1$.  Second, a congruence analysis on both $F(n, k)$ and the $G(n, k)$ function returned by the WZ algorithm is tractable.

Suppose a supercongruence's sum has a summand denoted $F(n)$ that is hypergeometric in $n$.  If the sum has a closed form in hypergeometric terms, Gosper's algorithm finds $G(n)$ hypergeometric in $n$ such that $F(n) = (N-1)G(n) = G(n+1) - G(n)$ and the congruence analysis can proceed from the resulting telescoping sum:
$$\sum_{n=0}^M F(n) = G(M+1) - G(0).$$

\noindent
Otherwise, to use the WZ algorithm, the term $F(n)$ must be modified so that it also depends on the variable $k$.  This must be done in a way that breaks symmetry between $n$ and $k$, for otherwise one runs the risk of running Gosper's algorithm in disguise.  Moreover, it must be possible to recover the original $F(n)$ from $F(n, k)$.  Suppose then that $F(n, k)$ is the modified version of $F(n)$.  To facilitate the subsequent congruence analysis, Zudilin \cite{Zudilin09} and others have conveniently relied on \emph{self nulling} factors such as $\ph{1}{n-k}$ in the denominator of $F(n, k)$ so that $F(n, k) = 0$ when $k > n$ by \eqref{e-self-nulling}.  Changing one of the Pochhammer symbols in the numerator from $\ph{x}{n}$ to $\ph{x}{n+k}$ in tandem tends to help the WZ algorithm keep the difference operators simple.

For example, the left hand side of the Van Hamme supercongruence (H.2) is
$$\sum_{n=0}^{(p-1)/2} \frac{\ph{1/2}{n}^3}{\ph{1}{n}^3}.$$

\noindent
Set $F(n) = \ph{1/2}{n}^3 / \ph{1}{n}^3$.  Since Gosper's algorithm fails on $F(n)$, generalize the modifications used by Zudilin's proof of (B.2) \cite{Zudilin09} and modify $F(n)$ as follows:
\begin{equation} \label{e-wz-device-h2}
F(n) \mapsto F(n, k) = \frac{\ph{1/2}{n}^2 \ph{1/2}{n+k}}{\ph{1}{n}^2 \ph{1}{n-k}}.
\end{equation}

\noindent
It is clear that $F(n, 0)$ recovers the original $F(n)$ summand.  At this point, applying the WZ algorithm to $F(n, k)$ returns the difference operator
\begin{equation} \label{e-bad-telescoping}
\Delta = -4(4k + 3)K - (4k + 1)(2k + 1)^2.
\end{equation}

\noindent
Regardless of what $G(n, k)$ is, the presence of non constant polynomial coefficients in the difference operator suggests moving further will be much harder than if $\Delta = K - 1$.  Even worse, the coefficients' signs will greatly obstruct telescoping.  However, both polynomial coefficients split into plain linear factors with integer coefficients, motivating Theorem \ref{t-wz-pair-polynomial-elimination} proved below.

\begin{proof}[Proof of Theorem \ref{t-wz-pair-polynomial-elimination}]
Let $\Delta$ be the difference operator of the form $\Delta = p_1(k) K + p_0(k)$ as in \eqref{e-Delta-linear-shape}.  For $0 \leq k \leq m$, the hypotheses provide that
\begin{equation} \label{e-t1-1}
\Delta F(n, k) = p_1(k) F(n, k+1) + p_0(k) F(n, k) = G(n+1, k) - G(n, k),
\end{equation}

\noindent
whence $F(n, k)$ and $G(n, k)$ form a generalized WZ pair.  If there exists $q(k)$, hypergeometric in $k$, with $q(0) = 1$, and satisfying
\begin{equation} \label{e-t1-q}
\frac{q(k+1)}{q(k)} = \frac{-p_1(k)}{p_0(k)},
\end{equation}

\noindent
then setting
$$\Fbar (n, k) = q(k) F(n, k)
\qquad \textrm{and} \qquad
\Gbar (n, k) = -q(k) G(n, k) / p_0(k)$$

\noindent
suffices.  In fact, multiplying \eqref{e-t1-1} by $-q(k) / p_0(k)$ gives
$$\Fbar (n, k+1) - \Fbar (n, k) = \Gbar (n + 1, k) - \Gbar (n, k),$$

\noindent
whence $\Fbar (n, k)$ and $\Gbar (n, k)$ form a WZ pair with $\Delta = K - 1$.  As $q(k)$, $F(n, k)$, and $G(n, k)$ are hypergeometric in $n$ and $k$, the functions $\Fbar (n, k)$ and $\Gbar (n, k)$ are hypergeometric in $n$ and $k$ too.  Moreover, as $q(0) = 1$ then $\Fbar (n, 0) = F(n, 0)$ follows.

Now, suppose that $r \in \mathbb{F}[k]$ with $\deg r = d \geq 0$ splits into linear factors over $\mathbb{F}$ such that
$$r(k) = c \cdot \prod_{j=1}^d (m_j k + b_j),$$

\noindent
where $m_j \neq 0$ for any $j$.  For each such $r(k)$, define
\begin{equation} \label{e-thm-transform}
\varphi_r(k) = c^k \cdot \prod_{j=1}^d m_j^k \ph{b_j / m_j}{k}.
\end{equation}

\noindent
Then, setting
\begin{equation} \label{e-t1-qk}
q(k) = (-1)^k \cdot \frac{\varphi_{p_1}(k)}{\varphi_{p_0}(k)}
\end{equation}

\noindent
gives that \eqref{e-t1-q} is satisfied, that $q(k)$ is hypergeometric in $k$, and that $q(0) = 1$.  The restriction that $p_0(k), p_1(k) \neq 0$ for any relevant value of $k$ prevents both division by zero in \eqref{e-t1-q}, as well as multiplication by zero in \eqref{e-t1-1} and \eqref{e-thm-transform}.  This completes the proof.
\end{proof}

\subsection{Application of Theorem \ref{t-wz-pair-polynomial-elimination} to (H.2)}

To illustrate the utility of Theorem \ref{t-wz-pair-polynomial-elimination}, consider the following proof of the Van Hamme supercongruence (H.2).  Unlike the proof in Swisher \cite{Swisher15}, say, the argument here uses the WZ method to reach the congruence analysis stage and does not require the use of a specialized hypergeometric transformation.

To prevent clutter, for $d \in \N$ define
$$\spmox{d} = \fpmox{d}.$$

\begin{proof}[Proof of Theorem \ref{t-van-hamme}, (H.2)]
The following extends (H.2) modulo $p^3$ for the case $p \equiv 3 \pmod{4}$ as shown in Liu \cite{Liu}.  Set $F(n) = \ph{1/2}{n}^3 / \ph{1}{n}^3$, then apply \eqref{e-wz-device-h2} to obtain
$$F(n, k) = \frac{\ph{1/2}{n}^2 \ph{1/2}{n+k}}{\ph{1}{n}^2 \ph{1}{n-k}}.$$

\noindent
Running the WZ algorithm on $F(n, k)$ gives a difference operator $\Delta$ of the form \eqref{e-Delta-shape} with polynomial coefficients
\begin{eqnarray*}
p_1(k) & = & -4(4k + 3), \\
p_0(k) & = & - (4k+1) (2k+1)^2,
\end{eqnarray*}

\noindent
which is unsuitable for further analysis as discussed in regards to \eqref{e-bad-telescoping}.  Thus, the hypergeometric $G(n, k)$ satisfying $\Delta F(n, k) = (N-1) G(n, k)$ also returned by the WZ algorithm is immaterial here.  However, note that $p_0(k)$ and $p_1(k)$ are not zero for $0 \leq k < \spmoh$, so
$$\frac{-p_1(k)}{p_0(k)} = \frac{-4(4k+3)}{(4k+1) (2k+1)^2}.$$

\noindent
Use \eqref{e-t1-qk} to define
$$q(k)
= \frac{(-1)^k 4^k 4^k \ph{3/4}{k}}{4^k \ph{1/4}{k} 2^k 2^k \ph{1/2}{k}^2}
= \frac{(-1)^k \ph{3/4}{k}}{\ph{1/4}{k} \ph{1/2}{k}^2},$$

\noindent
which is hypergeometric in $k$ and satisfies both \eqref{e-t1-q} and $q(0) = 1$, as in the proof of Theorem \ref{t-wz-pair-polynomial-elimination}.   Set $\Fbar (n, k) = q(k) F(n, k)$ so that
\begin{equation} \label{e-h2-fbar}
\Fbar (n, k) = \frac{(-1)^k \ph{3/4}{k}}{\ph{1/4}{k} \ph{1/2}{k}^2} \cdot \frac{\ph{1/2}{n}^2 \ph{1/2}{n+k}}{\ph{1}{n}^2 \ph{1}{n-k}},
\end{equation}

\noindent
whence $\Fbar (n, 0) = F(n, 0)$.  Also set $\Gbar (n, k) = -q(k) G(n, k) / p_0(k)$, so
$$\Gbar (n, k) = - \frac{(-1)^k \ph{3/4}{k}}{\ph{1/4}{k} \ph{1/2}{k}^2} \cdot \frac{1}{(4k+1)(2k+1)^2} \cdot G(n, k).$$

\noindent
Running the WZ algorithm on $\Fbar (n, k)$ returns the difference operator $\Delta = K - 1$ as expected by Theorem \ref{t-wz-pair-polynomial-elimination}, as well as $\Gbar (n, k)$ from which $G(n, k)$ can be recovered.  Thus, it is not necessary to know $G(n, k)$ to proceed.  In fact,
$$\Gbar (n, k) =
- \frac{(-1)^k \ph{3/4}{k}}{\ph{1/4}{k} \ph{1/2}{k}^2}
\cdot \frac{1}{(4k+1)(2k+1)^2}
\cdot \frac{8 n^2 (n-k) \ph{1/2}{n}^2 \ph{1/2}{n+k}}{\ph{1}{n}^2 \ph{1}{n-k}}.$$

Summing the WZ pair $(K-1) \Fbar (n, k) = (N-1) \Gbar (n, k)$ over $n$ telescopes the right hand side, and using $\Gbar (0, k) = 0$ gives
\begin{equation} \label{e-h2-wz-pair-sum}
\sum_{n=0}^{\spmoh} \Fbar (n, k+1) - \sum_{n=0}^{\spmoh} \Fbar (n, k) = \Gbar (\spmoh+1, k) - \Gbar (0, k) = \Gbar (\spmoh+1, k).
\end{equation}

Any net factors of $p$ in the denominator of $\Gbar (n, k)$ will obstruct analyzing $\Gbar (n, k)$ modulo powers of $p$.  To prevent that, restrict $k$ to $0 \leq k < \spmoh$.  Moreover, only some of the terms in
$$\Gbar (\spmoh+1, k) = \frac{(p+1)^2 (2k - p - 1) \ph{1/2}{\spmoh+1}^2 \ph{1/2}{\spmoh+1+k} \ph{3/4}{k} (-1)^k}{(4k+1) (2k+1)^2 \ph{1}{\spmoh+1}^2 \ph{1}{\spmoh+1-k} \ph{1/2}{k}^2 \ph{1/4}{k}}$$

\noindent
contribute factors of $p$.  It is not too hard to see that $(p+1)^2$, $2k - p - 1$, $(-1)^k$, $2k+1$, $\ph{1}{\spmoh+1}$, $\ph{1}{\spmoh+1-k}$, and $\ph{1/2}{k}$ do not have factors of $p$ when $0 \leq k < \spmoh$.  Thus write,
\begin{equation} \label{e-h2-3mod4}
\Gbar (\spmoh+1, k) = \alpha \cdot \frac{\ph{1/2}{\spmoh+1}^2 \ph{1/2}{\spmoh+1+k} \ph{3/4}{k}}{(4k+1) \ph{1/4}{k}},
\end{equation}

\noindent
for $\alpha \in \Z_p^\times$.  Let $p' \in \Z$.  Of the factors in \eqref{e-h2-3mod4}, note $\ph{1/2}{\spmoh + 1}$ and $\ph{1/2}{\spmoh + 1 + k}$ each contribute $p$, the factor $\ph{3/4}{k}$ contributes $p$ only when $p = 4p'+3$ and $k > p'$, the factor $4k+1$ contributes $p$ only when $p = 4p'+1$ and $k = p'$, and $\ph{1/4}{k}$ contributes $p$ only when $p = 4p'+1$ and $k > p'$.

Consider the case $p = 4p' + 3$.  The restriction $0 \leq k < \spmoh$ prevents $(2k+1)^2$ from contributing a factor of $p^2$ in the denominator of $\Gbar (n, k)$, so $\Gbar (\spmoh+1, k)$ contains the factor $p^3$ for all such $k$ (that is, the contribution of $\ph{3/4}{k}$ when $k > p'$ is immaterial).  It follows that
$$\Gbar (\spmoh+1, k) \equiv 0 \pmod{p^3},$$

\noindent
and hence \eqref{e-h2-wz-pair-sum} implies
\begin{equation} \label{e-h2-f-chain}
\sum_{n=0}^{\spmoh} \Fbar (n, k+1) \equiv \sum_{n=0}^{\spmoh} \Fbar (n, k) \pmod{p^3}
\end{equation}

\noindent
for each $0 \leq k < \spmoh$.  In particular, using \eqref{e-h2-f-chain} for successive values of $k$ yields
\begin{equation} \label{e-h2-sum-f-chain}
\sum_{n=0}^{\spmoh} F(n)
= \sum_{n=0}^{\spmoh} \Fbar (n, 0)
\equiv \sum_{n=0}^{\spmoh} \Fbar (n, \spmoh) \pmod{p^3}.
\end{equation}

\noindent
The self nulling term $\ph{1}{n-k}$ in the denominator of $\Fbar (n, k)$ gives by \eqref{e-self-nulling} and \eqref{e-h2-fbar} that
$$\sum_{n=0}^{\spmoh} \Fbar (n, \spmoh)
= \Fbar (\spmoh, \spmoh)
= \frac{- \ph{1/2}{2 \spmoh} \ph{3/4}{\spmoh}}{\ph{1}{\spmoh}^2 \ph{1/4}{\spmoh}},$$

\noindent
so by \eqref{e-h2-sum-f-chain},
\begin{equation} \label{e-h2-key-reduction}
\sum_{n=0}^{\spmoh} F(n) \equiv \frac{- \ph{1/2}{2 \spmoh} \ph{3/4}{\spmoh}}{\ph{1}{\spmoh}^2 \ph{1/4}{\spmoh}} \pmod{p^3}.
\end{equation}

\noindent
Use Lemma \ref{l-ph-gp-identities-H} to rewrite $\Fbar (\spmoh, \spmoh)$ in terms of $\Gamma_p$ to obtain
\begin{equation} \label{e-h2-fbar-3mod4}
\Fbar (\spmoh, \spmoh)
= - \frac{-p}{2p-1} \cdot \frac{\Gamma_p \left ( \frac{1}{2} + p \right ) }{\Gamma_p \left ( \frac{1}{2} \right ) } \cdot \frac{p}{4} \cdot \frac{\Gamma_p \left ( \frac{1}{4} + \frac{p}{2} \right ) }{\Gamma_p \left ( \frac{3}{4} \right ) } \cdot \frac{1}{\Gamma_p \left ( \frac{1}{2} + \frac{p}{2} \right )^2} \cdot \frac{\Gamma_p \left ( \frac{1}{4} \right ) }{\Gamma_p \left ( \frac{-1}{4} + \frac{p}{2} \right ) }.
\end{equation}

\noindent
As $(2p-1)/4 \perp p$, by \eqref{e-gamma-p-props-2} it follows that
\begin{equation} \label{e-h2-gamma-p-transformation}
\Gamma_p \left ( \frac{-1}{4} + \frac{p}{2} \right ) (-1) \cdot \frac{2p-1}{4}
= \Gamma_p \left ( \frac{-1}{4} + \frac{p}{2} \right ) (-1) \left ( \frac{-1}{4} + \frac{p}{2} \right )
= \Gamma_p \left ( \frac{3}{4} + \frac{p}{2} \right ).
\end{equation}

\noindent
Moreover, using \eqref{e-gamma-p-reflection} and Lemma \ref{l-gamma-p-a0-reflection} gets
$$\Gamma_p(1/4) \Gamma_p(3/4) = (-1)^{a_0(3/4)} = (-1)^{a_0(1/4)},$$

\noindent
whence, in particular,
\begin{equation} \label{e-h2-gamma-p-squareds}
\frac{\Gamma_p(1/4)^2}{\Gamma_p(3/4)^2}
= \Gamma_p(1/4)^2 \cdot \frac{\Gamma_p(1/4)^2}{(-1)^{2 a_0(3/4)}} = \Gamma_p(1/4)^4.
\end{equation}

\noindent
Note that $\Gamma_p(1/2)^2 = (-1)^{\spmoh + 1} = 1$ by \eqref{e-gamma-p-props-3} as $\spmoh$ is odd.  Substitute \eqref{e-h2-gamma-p-transformation} and \eqref{e-h2-gamma-p-squareds} into \eqref{e-h2-fbar-3mod4}, then multiply and divide by suitable factors.  Rearranging yields
$$\Fbar (\spmoh, \spmoh)
= \frac{-p^2 \Gamma_p \left ( \frac{1}{4} \right )^4 }{16} \cdot \frac{\Gamma_p \left ( \frac{1}{2} + p \right ) }{\Gamma_p \left ( \frac{1}{2} \right ) } \cdot \frac{\Gamma_p \left ( \frac{1}{2} \right )^2}{\Gamma_p \left ( \frac{1}{2} + \frac{p}{2} \right )^2} \cdot \frac{\Gamma_p \left ( \frac{1}{4} + \frac{p}{2} \right ) }{\Gamma_p \left ( \frac{1}{4} \right ) } \cdot \frac{\Gamma_p \left ( \frac{3}{4} \right ) }{\Gamma_p \left ( \frac{3}{4} + \frac{p}{2}\right )}.$$

\noindent
Assume $p \geq 5$.  By Lemma \ref{l-h2-gp-quotient-p3}, the second and third factors above are $1$ modulo $p^3$.  By Theorem \ref{t-gk-theorems} with $t = 0$, the fourth and fifth factors above are $1$ modulo $p$.  Using \eqref{e-mod-trick}, it follows from \eqref{e-h2-sum-f-chain} and \eqref{e-h2-key-reduction} that
\begin{equation} \label{e-h2-3-mod-4}
\sum_{n=0}^{\spmoh} F(n)
\equiv \Fbar  (\spmoh, \spmoh)
\equiv \frac{-p^2 \Gamma_p \left ( \frac{1}{4} \right )^4 }{16} \pmod{p^3}.
\end{equation}

\noindent
A quick calculation shows $\Gamma_3(1/4)^4 = 1$, and it follows that \eqref{e-h2-3-mod-4} holds for $p = 3$ as well.  This establishes and extends (H.2) modulo $p^3$ for $p \equiv 3 \pmod{4}$.

When $p = 4p'+1$, the factors of $\Gbar (\spmoh+1, k)$ which could contain factors of $p$ are
$$\frac{\ph{1/2}{\spmoh+1}^2 \ph{1/2}{\spmoh+1+k}}{(4k+1) \ph{1/4}{k}}.$$

\noindent
By the preceding analysis, the numerator has a factor of $p^3$.  The denominator has one factor of $p$ when $k = p'$ due to $4k+1$, and one factor of $p$ when $k > p'$ due to $\ph{1/4}{k}$.  So $\Gbar (\spmoh+1, k)$ has a factor of $p^2$ in all cases, and thus $\Gbar (\spmoh+1, k) \equiv 0 \pmod{p^2}$.  It remains to show that $\Fbar (\spmoh, \spmoh) \equiv - \Gamma_p(1/4)^4 \pmod{p^2}$.  In this case $\spmoh$ is even, hence
$$\Fbar (\spmoh, \spmoh) = \frac{\ph{1/2}{2 \spmoh} \ph{3/4}{\spmoh}}{\ph{1}{\spmoh}^2 \ph{1/4}{\spmoh}}.$$

\noindent
Use Lemma \ref{l-ph-gp-identities-H} to rewrite $\Fbar (\spmoh, \spmoh)$ in terms of $\Gamma_p$, yielding
\begin{equation} \label{e-h2-fbar-1mod4}
\Fbar (\spmoh, \spmoh)
= \frac{-4}{2p-1} \cdot \frac{\Gamma_p \left ( \frac{1}{2} + p \right ) \Gamma_p \left ( \frac{1}{4} \right ) \Gamma_p \left ( \frac{1}{4} + \frac{p}{2} \right )}{\Gamma_p \left ( \frac{1}{2} \right ) \Gamma_p \left ( \frac{1}{2} + \frac{p}{2} \right )^2 \Gamma_p \left ( \frac{-1}{4} + \frac{p}{2} \right ) \Gamma_p \left ( \frac{3}{4} \right )}.
\end{equation}

\noindent
Substitute \eqref{e-h2-gamma-p-transformation} into \eqref{e-h2-fbar-1mod4} and rearrange to reach
$$\Fbar (\spmoh, \spmoh)
= \frac{\Gamma_p \left ( \frac{1}{4} + \frac{p}{2} \right )}{\Gamma_p \left ( \frac{3}{4} \right )} \cdot \frac{\Gamma_p \left ( \frac{1}{4} \right )}{\Gamma_p \left ( \frac{3}{4} + \frac{p}{2} \right )} \cdot \frac{\Gamma_p \left (\frac{1}{2} + p \right )}{\Gamma_p \left ( \frac{1}{2} \right )} \cdot \frac{1}{\Gamma_p \left ( \frac{1}{2} + \frac{p}{2} \right )^2}.
$$

\noindent
As $p = 4p' + 1$, then $\Gamma_p(1/2)^2 = (-1)^{\frac{p+1}{2}} = -1$ by \eqref{e-gamma-p-props-3}.  Using this fact to add $\Gamma_p(1/2)^2$ above, multiplying and dividing by suitable factors gives
\begin{equation} \label{e-h2-1mod4}
\Fbar (\spmoh, \spmoh)
= - \frac{\Gamma_p \left ( \frac{1}{4} + \frac{p}{2} \right )}{\Gamma_p \left ( \frac{1}{4} \right )} \cdot \frac{\Gamma_p \left (\frac{3}{4} \right )}{\Gamma_p \left ( \frac{3}{4} + \frac{p}{2} \right )} \cdot \frac{\Gamma_p \left (\frac{1}{2} + p \right )}{\Gamma_p \left ( \frac{1}{2} \right )} \cdot \frac{\Gamma_p \left ( \frac{1}{2} \right )^2}{\Gamma_p \left ( \frac{1}{2} + \frac{p}{2} \right )^2} \cdot \frac{\Gamma_p \left ( \frac{1}{4} \right )^2}{\Gamma_p \left ( \frac{3}{4} \right )^2}.
\end{equation}

\noindent
The rightmost factor in \eqref{e-h2-1mod4} is $\Gamma_p(1/4)^4$ because of \eqref{e-h2-gamma-p-squareds}.  Now, since $p = 4p' + 1$ then $p \geq 5$, so by Theorem \ref{t-gk-theorems} and \eqref{e-gk-easy-identities},
\begin{eqnarray*}
\frac{\Gamma_p \left ( \frac{1}{4} + \frac{p}{2} \right )}{\Gamma_p \left ( \frac{1}{4} \right )} & \equiv & 1 + \frac{G_1(1/4)p}{2} \pmod{p^2}, \\
\frac{\Gamma_p \left ( \frac{3}{4} \right )}{\Gamma_p \left ( \frac{3}{4} + \frac{p}{2} \right )} & \equiv & \frac{1}{1 + \frac{G_1(1/4)p}{2}} \pmod{p^2}.
\end{eqnarray*}

\noindent
Hence, the product of the leftmost two factors in \eqref{e-h2-1mod4} is $1$ modulo $p^2$.  The remaining two factors in \eqref{e-h2-1mod4} are $1$ modulo $p^2$ because of Lemma \ref{l-h2-gp-quotient-p3}.  Altogether, using \eqref{e-h2-sum-f-chain} and \eqref{e-h2-key-reduction} gets
$$\sum_{n=0}^{\spmoh} F(n)
\equiv \Fbar (\spmoh, \spmoh)
\equiv - \Gamma_p(1/4)^4 \pmod{p^2}$$

\noindent
when $p \equiv 1 \pmod{4}$.  This completes the proof.
\end{proof}

\section{Applications of Theorem \ref{t-wz-pair-polynomial-elimination}} \label{s-applications}

This section uses Theorem \ref{t-wz-pair-polynomial-elimination} to prove (C.2), (E.2), (F.2), (G.2), and (B.2).  The proofs start similarly to that of (H.2).  Given a summand as in \eqref{e-van-hamme},
$$F(n) = u(n) c^n \cdot \frac{\ph{1/a}{n}^m}{\ph{1}{n}^m},$$

\noindent
generalizing \eqref{e-wz-device-h2} define
\begin{equation} \label{e-wz-device-pm}
F(n, k) = u(n) c^n \cdot \frac{\ph{1/a}{n}^{m-1} \ph{1/a}{n+k}}{\ph{1}{n}^{m-1} \ph{1}{n-k}},
\end{equation}

\noindent
which satisfies $F(n, 0) = F(n)$.  In these cases, running the WZ algorithm on $F(n, k)$ produces a $G(n, k)$ and a difference operator $\Delta$ of the form \eqref{e-Delta-shape} with $d = \deg \Delta > 1$.  Thus, Theorem \ref{t-wz-pair-polynomial-elimination} does not directly apply to the resulting generalized WZ pair $F(n, k)$ and $G(n, k)$.  However, motivated by the proof of Theorem \ref{t-wz-pair-polynomial-elimination}, one can modify $F(n, k)$ such that the WZ algorithm will return a linear difference operator for which Theorem \ref{t-wz-pair-polynomial-elimination} does apply, while still being able to recover $F(n)$.  This phenomenon will be referred to as \emph{degree collapse}.

In particular, in each of these cases the difference operator $\Delta$ of the form \eqref{e-Delta-shape} has the property that $p_0(k)$ contains the factor $(ak + 1)^{m-1}$.  So, in the spirit of \eqref{e-thm-transform}, modify $F(n, k)$ by setting
\begin{equation} \label{e-deg-transform}
\Fdeg (n, k) = \frac{F(n, k)}{\ph{1/a}{k}^{m-1}},
\end{equation}

\noindent
which satisfies $\Fdeg (n, 0) = F(n)$.  Running the WZ algorithm on $\Fdeg (n, k)$ yields a new difference operator $\Delta$ of degree $1$, so the (H.2) proof method can still apply as shown in the proofs below.

\begin{proof}[Proof of Theorem \ref{t-van-hamme}, (C.2)]
When $p = 3$, a quick computation shows that
$$\sum_{n=0}^1 (4n+1) \cdot \frac{\ph{1/2}{n}^4}{\ph{1}{n}^4} = \frac{21}{16} \equiv 3 \pmod{27}.$$

\noindent
So, let $p \geq 5$ and using \eqref{e-wz-device-pm} set
$$F(n, k) = \frac{(4n+1) \ph{1/2}{n}^3 \ph{1/2}{n+k}}{\ph{1}{n}^3 \ph{1}{n-k}}.$$

\noindent
Running the WZ algorithm on $F(n, k)$ returns a quartic difference operator $\Delta$ of the form \eqref{e-Delta-shape} with polynomial coefficients
\begin{eqnarray*}
p_4(k) & = & 3600, \\
p_3(k) & = & 9776 k^2 + 41944 k + 50448, \\
p_2(k) & = & 7728 k^4 + 47952 k^3 + 110932 k^2 + 109628 k + 36204, \\
p_1(k) & = & 2 (33 k^3 - 285 k^2 - 1045 k - 863) (2k + 3)^3, \\
p_0(k) & = & -(4k + 5) (4k + 3) (2k + 3)^3 (2k + 1)^3.
\end{eqnarray*}

\noindent
Theorem \ref{t-wz-pair-polynomial-elimination} does not apply as $\deg \Delta > 1$, so use \eqref{e-deg-transform} to define
$$\Fdeg (n, k) = \frac{(4n+1) \ph{1/2}{n}^3 \ph{1/2}{n+k}}{\ph{1}{n}^3 \ph{1}{n-k} \ph{1/2}{k}^3}.$$

\noindent
Running the WZ algorithm on $\Fdeg(n, k)$ yields the difference operator
$$\Delta = -(k + 1) K - 1.$$

\noindent
Use \eqref{e-t1-qk} to obtain $q(k) = (-1)^k \ph{1}{k}$, and following the proof of Theorem \ref{t-wz-pair-polynomial-elimination} define
$$\Fbar (n, k) = \frac{(-1)^k (4n+1) \ph{1/2}{n}^3 \ph{1/2}{n+k} \ph{1}{k}}{\ph{1}{n}^3 \ph{1}{n-k} \ph{1/2}{k}^3}.$$

\noindent
As expected, running the WZ algorithm on $\Fbar (n, k)$ returns $\Delta = K-1$ with
$$\Gbar (n, k) = - \frac{16 n^3 (n-k) (-1)^k \ph{1/2}{n}^3 \ph{1/2}{n+k} \ph{1}{k}}{(2k+1)^3 \ph{1}{n}^3 \ph{1}{n-k} \ph{1/2}{k}^3}.$$

\noindent
Thus,
\begin{equation} \label{e-fbar-wz-pair}
\Fbar (n, k+1) - \Fbar (n, k) = \Gbar (n+1, k) - \Gbar (n, k),
\end{equation}

\noindent
and since $\Gbar (0, k) = 0$,
\begin{equation} \label{e-fbar-wz-pair-sum-c2}
\sum_{n=0}^{\spmoh} \Fbar (n, k+1) - \sum_{n=0}^{\spmoh} \Fbar (n, k) = \Gbar (\spmoh + 1, k).
\end{equation}

\noindent
Evaluating $\Gbar (\spmoh+1, k)$ results in
\begin{equation} \label{e-c2-gbar}
\Gbar (\spmoh+1, k) = - \frac{(p+1)^3 (p + 1 - 2k) (-1)^k \ph{1/2}{\spmoh+1}^3 \ph{1/2}{\spmoh+1+k} \ph{1}{k}}{(2k+1)^3 \ph{1}{\spmoh+1}^3 \ph{1}{\spmoh+1-k} \ph{1/2}{k}^3}.
\end{equation}

\noindent
Restricting $k$ to $0 \leq k < \spmoh$ prevents factors of $p$ in the denominator of \eqref{e-c2-gbar}.  In the numerator of \eqref{e-c2-gbar}, the only factors of $p$ come from $\ph{1/2}{\spmoh+1}^3 \ph{1/2}{\spmoh+1+k}$ which contributes a factor of $p^4$.  It follows that $\Gbar (\spmoh+1, k) \equiv 0 \pmod{p^4}$, and so \eqref{e-fbar-wz-pair-sum-c2} yields that
\begin{equation} \label{e-c2-chain}
\sum_{n=0}^{\spmoh} \Fbar (n, k+1) \equiv \sum_{n=0}^{\spmoh} \Fbar (n, k) \pmod{p^4}
\end{equation}

\noindent
for $0 \leq k < \spmoh$.  Setting $k = \spmoh-1$, the sum $\sum_{n=0}^{\spmoh} \Fbar (n, \spmoh)$ reduces to $\Fbar (\spmoh, \spmoh)$ by \eqref{e-self-nulling} due to the self nulling factor $\ph{1}{n-k}$ in $\Fbar (n, k)$'s denominator.  Hence, it remains to show that
$$\Fbar (\spmoh, \spmoh) = \frac{(-1)^{\spmoh} (2p-1) \ph{1/2}{2 \spmoh}}{\ph{1}{\spmoh}^2} \equiv p \pmod{p^3},$$

\noindent
because \eqref{e-c2-chain} implies
$$\sum_{n=0}^{\spmoh} F(n) \equiv \sum_{n=0}^{\spmoh} \Fbar (n, \spmoh) \equiv \Fbar (\spmoh, \spmoh) \pmod{p^4}.$$

Use Lemma \ref{l-ph-gp-identities-H} to rewrite $\Fbar (\spmoh, \spmoh)$ in terms of $\Gamma_p$ rather than Pochhammer symbols, then multiply and divide by $\Gamma_p(1/2)^2$ to obtain
$$\Fbar (\spmoh, \spmoh)
= \frac{(-1)^{\fpmoh}}{\Gamma_p(1/2)^2} \cdot (2p-1) \cdot \frac{p}{2} \cdot \frac{\Gamma_p(-1/2 + p)}{\Gamma_p(1/2)} \cdot \frac{\Gamma_p(1/2)^2}{\Gamma_p \left ( \frac{1}{2} + \frac{p}{2} \right )^2}.$$

\noindent
Simplify the leftmost $(-1)^{\fpmoh} / \Gamma_p(1/2)^2 = (-1)^{\fpmoh - \fppoh}$ to $-1$ with \eqref{e-gamma-p-props-3} and rearrange to get
$$\Fbar (\spmoh, \spmoh) = p \cdot \frac{-(2p-1)}{2} \cdot \frac{\Gamma_p(-1/2 + p)}{\Gamma_p(1/2)} \cdot \frac{\Gamma_p(1/2)^2}{\Gamma_p \left ( \frac{1}{2} + \frac{p}{2} \right )^2}.$$

\noindent
As $2p-1 \perp p$, use \eqref{e-gamma-p-props-2} to absorb $-(2p-1)/2$ into $\Gamma_p(-1/2 + p)$ to obtain $\Gamma_p(1/2 + p)$ and thus
$$
\Fbar (\spmoh, \spmoh)
= p \cdot \frac{\Gamma_p(1/2 + p)}{\Gamma_p(1/2)} \cdot \frac{\Gamma_p(1/2)^2}{\Gamma_p \left ( \frac{1}{2} + \frac{p}{2} \right )^2}.
$$

\noindent
Finally, use Lemma \ref{l-h2-gp-quotient-p3} to conclude that
$$\Fbar (\spmoh, \spmoh)
= p \cdot \frac{\Gamma_p(1/2 + p)}{\Gamma_p(1/2)} \cdot \frac{\Gamma_p(1/2)^2}{\Gamma_p \left ( \frac{1}{2} + \frac{p}{2} \right )^2}
\equiv p \pmod{p^3}.$$

\noindent
This completes the proof.
\end{proof}

\begin{proof}[Proof of Theorem \ref{t-van-hamme}, (E.2)]
The statement provides $p \equiv 1 \pmod{6}$, hence $p > 5$ and $\spmot$ is even.  Using \eqref{e-wz-device-pm}, set
$$F(n, k) = (6n+1) (-1)^n \cdot \frac{\ph{1/3}{n}^2 \ph{1/3}{n+k}}{\ph{1}{n}^2 \ph{1}{n-k}}.$$

\noindent
Running the WZ algorithm on $F(n, k)$ returns a cubic difference operator $\Delta$ of the form \eqref{e-Delta-shape} with polynomial coefficients
\begin{eqnarray*}
p_3(k) & = & 10584, \\
p_2(k) & = & 22626 k^2 + 77121 k + 70794, \\
p_1(k) & = & 6 (250 k^2 + 563 k + 340) (3k + 4)^2, \\
p_0(k) & = & (6k + 7) (3k + 4)^2 (3k + 2) (3k + 1)^2.
\end{eqnarray*}

\noindent
Theorem \ref{t-wz-pair-polynomial-elimination} does not apply as $\deg \Delta > 1$, so use \eqref{e-deg-transform} to define
$$\Fdeg (n, k) = (6n+1) (-1)^n \cdot \frac{\ph{1/3}{n}^2 \ph{1/3}{n+k}}{\ph{1}{n}^2 \ph{1}{n-k} \ph{1/3}{k}^2}.$$

\noindent
Running the WZ algorithm on $\Fdeg(n, k)$ yields the difference operator $\Delta = K + 1$.  Use \eqref{e-t1-qk} to obtain $q(k) = (-1)^k$, and following the proof of Theorem \ref{t-wz-pair-polynomial-elimination} define
$$\Fbar (n, k) = (6n+1) (-1)^{n+k} \cdot \frac{\ph{1/3}{n}^2 \ph{1/3}{n+k}}{\ph{1}{n}^2 \ph{1}{n-k} \ph{1/3}{k}^2},$$

\noindent
whence the WZ algorithm returns $\Delta = K - 1$ and
$$\Gbar (n, k) = - \frac{27 n^2 (n-k) (-1)^{n+k} \ph{1/3}{n}^2 \ph{1/3}{n+k}}{(3k+1)^2 \ph{1}{n}^2 \ph{1}{n-k} \ph{1/3}{k}^2}.$$

\noindent
It follows that \eqref{e-fbar-wz-pair} holds, and since $\Gbar (0, k) = 0$,
\begin{equation} \label{e-fbar-wz-pair-sum-e2}
\sum_{n=0}^{\spmot} \Fbar (n, k+1) - \sum_{n=0}^{\spmot} \Fbar (n, k) = \Gbar (\spmot + 1, k).
\end{equation}

\noindent
Restricting the values taken by $k$ to $0 \leq k < \spmot$ prevents factors of $p$ in the denominator of $\Gbar (n, k)$.  Looking at
$$\Gbar (\spmot+1, k) = - \frac{(p+2)^2 (3k-p-2) (-1)^{\spmot+1+k} \ph{1/3}{\spmot+1}^2 \ph{1/3}{\spmot+1+k}}{(3k+1)^2 \ph{1}{\spmot+1}^2 \ph{1}{\spmot+1-k} \ph{1/3}{k}^2},$$

\noindent
observe there are no factors of $p$ in the denominator, and that the numerator has a factor of $p^3$ due to $\ph{1/3}{\spmot+1}^2 \ph{1/3}{\spmot+1+k}$.  It follows that $\Gbar (\spmot+1, k) \equiv 0 \pmod{p^3}$, and so \eqref{e-fbar-wz-pair-sum-e2} yields that
$$\sum_{n=0}^{\spmot} \Fbar (n, k+1) \equiv \sum_{n=0}^{\spmot} \Fbar (n, k) \pmod{p^3}$$

\noindent
for $0 \leq k < \spmot$.  Due to the self nulling factor $\ph{1}{n-k}$ in $\Fbar (n, k)$'s denominator, substituting $k = \spmot-1$ and using \eqref{e-self-nulling} gives
$$\sum_{n=0}^{\spmot} \Fbar (n, \spmot) = \Fbar (\spmot, \spmot) = \frac{(2p-1) \ph{1/3}{2 \spmot}}{\ph{1}{\spmot}^2}.$$

\noindent
It suffices to show that $\Fbar (\spmot, \spmot) \equiv p \pmod{p^3}$.  The term $\ph{1/3}{2 \spmot}$ has the factor $p/3$, so rewrite $\Fbar (\spmot, \spmot)$ using \eqref{e-gamma-p-props-2} and that $\Gamma_p(1)^2 = 1$ to get
$$\Fbar (\spmot, \spmot)
= p \cdot \frac{2p-1}{3} \cdot \frac{\Gamma_p(1/3 + 2 \spmot)}{\Gamma_p(1/3)} \cdot \frac{1}{\Gamma_p(\spmot+1)^2}.
$$

\noindent
Since $(2p - 1)/3 \perp p$, absorb this factor into $\Gamma_p(1/3 + 2 \spmot)$ with \eqref{e-gamma-p-props-2} to reach
$$\Fbar (\spmot, \spmot)
= -p \cdot \frac{\Gamma_p(1/3 + 2 \spmot + 1)}{\Gamma_p(1/3)} \cdot \frac{1}{\Gamma_p(\spmot+1)^2}.
$$

\noindent
Rearrange using \eqref{e-gamma-p-reflection} and Lemma \ref{l-gamma-p-a0-reflection} in anticipation of applying Theorem \ref{t-gk-theorems}, obtaining
$$\Fbar (\spmot, \spmot)
= -p (-1)^{a_0(2/3)} \cdot \frac{\Gamma_p \left ( \frac{2}{3} + \frac{2p}{3} \right )}{\Gamma_p \left ( \frac{2}{3} \right )} \cdot \frac{\Gamma_p \left ( \frac{2}{3} \right )^2}{\Gamma_p \left ( \frac{2}{3} + \frac{p}{3} \right )^2}.$$

\noindent
Note that \eqref{e-a0-2o3-1mod6} implies $(-1)^{a_0(2/3)} = -1$, and so
\begin{equation} \label{e-e2-f-factors}
\Fbar (\spmot, \spmot)
= p \cdot \frac{\Gamma_p \left ( \frac{2}{3} + \frac{2p}{3} \right )}{\Gamma_p \left ( \frac{2}{3} \right )} \cdot \frac{\Gamma_p \left ( \frac{2}{3} \right )^2}{\Gamma_p \left ( \frac{2}{3} + \frac{p}{3} \right )^2}.
\end{equation}

\noindent
Apply Theorem \ref{t-gk-theorems} with $t=1$ to obtain
\begin{equation} \label{e-e2-mod-trick}
\frac{\Gamma_p \left ( \frac{2}{3} + \frac{2p}{3} \right )}{\Gamma_p \left ( \frac{2}{3} \right )} \cdot \frac{\Gamma_p \left ( \frac{2}{3} \right )^2}{\Gamma_p \left ( \frac{2}{3} + \frac{p}{3} \right )^2}
\equiv \frac{1 + G_1(2/3) \cdot \frac{2p}{3}}{\left ( 1 + G_1(2/3) \cdot \frac{p}{3} \right )^2}
\equiv 1 \pmod{p^2}.
\end{equation}

\noindent
After substituting \eqref{e-e2-mod-trick} into \eqref{e-e2-f-factors}, it follows from \eqref{e-mod-trick} that $\Fbar (\spmot, \spmot) \equiv p \pmod{p^3}$, which completes the proof.
\end{proof}

\begin{proof}[Proof of Theorem \ref{t-van-hamme}, (F.2)]
Note that $p \equiv 1 \pmod{4}$ implies $p \geq 5$.  Using \eqref{e-wz-device-pm}, set
$$F(n, k) = (8n+1) (-1)^n \cdot \frac{\ph{1/4}{n}^2 \ph{1/4}{n+k}}{\ph{1}{n}^2 \ph{1}{n-k}}.$$

\noindent
Running the WZ algorithm on $F(n, k)$ returns a cubic difference operator $\Delta$ of the form \eqref{e-Delta-shape} with polynomial coefficients
\begin{eqnarray*}
p_3(k) & = & 270400, \\
p_2(k) & = & 557184 k^2 + 1827232 k + 1627760, \\
p_1(k) & = & 4 (4737 k^2 + 9986 k + 5465) (4k + 5)^2, \\
p_0(k) & = & (8k + 9) (8k + 5) (4k + 5)^2  (4k + 1)^2.
\end{eqnarray*}

\noindent
Theorem \ref{t-wz-pair-polynomial-elimination} does not apply as $\deg \Delta > 1$, so use \eqref{e-deg-transform} to define
$$\Fdeg (n, k) = (8n+1) (-1)^n \cdot \frac{\ph{1/4}{n}^2 \ph{1/4}{n+k}}{\ph{1}{n}^2 \ph{1}{n-k} \ph{1/4}{k}^2}.$$

\noindent
Running the WZ algorithm on $\Fdeg(n, k)$ yields the difference operator $\Delta = K + 1$.  Use \eqref{e-t1-qk} to obtain $q(k) = (-1)^k$, and following the proof of Theorem \ref{t-wz-pair-polynomial-elimination} define
$$\Fbar (n, k) = (8n+1) (-1)^{n+k} \cdot \frac{\ph{1/4}{n}^2 \ph{1/4}{n+k}}{\ph{1}{n}^2 \ph{1}{n-k} \ph{1/4}{k}^2},$$

\noindent
whence the WZ algorithm returns $\Delta = K - 1$ and
$$\Gbar (n, k) = \frac{64 n^2 (n-k) (-1)^{n+k} \ph{1/4}{n}^2 \ph{1/4}{n+k}}{(4k+1)^2 \ph{1}{n}^2 \ph{1}{n-k} \ph{1/4}{k}^2}.$$

\noindent
It follows that \eqref{e-fbar-wz-pair} holds, and since $\Gbar (0, k) = 0$,
\begin{equation} \label{e-fbar-wz-pair-sum-f2}
\sum_{n=0}^{\spmoq} \Fbar (n, k+1) - \sum_{n=0}^{\spmoq} \Fbar (n, k) = \Gbar (\spmoq + 1, k).
\end{equation}

\noindent
Restricting the values taken by $k$ to $0 \leq k < \spmoq$ prevents unwanted factors of $p$ in the denominator of $\Gbar (n, k)$.  Examining
$$\Gbar (\spmoq+1, k) = \frac{(p+3)^2 (p + 3 - 4k) (-1)^{\spmoq+1+k} \ph{1/4}{\spmoq+1}^2 \ph{1/4}{\spmoq+1+k}}{(4k+1)^2 \ph{1}{\spmoq+1}^2 \ph{1}{\spmoq+1-k} \ph{1/4}{k}^2}$$

\noindent
shows that $\Gbar (\spmoq+1, k) \equiv 0 \pmod{p^3}$ since the denominator has no factors of $p$ and the numerator has a factor of $p^3$ due to $\ph{1/4}{\spmoq+1}^2 \ph{1/4}{\spmoq+1+k}$.  Consequently, by \eqref{e-fbar-wz-pair-sum-f2},
$$\sum_{n=0}^{\spmoq} \Fbar (n, k+1) \equiv \sum_{n=0}^{\spmoq} \Fbar (n, k) \pmod{p^3}.$$

\noindent
Due to the self nulling factor $\ph{1}{n-k}$ in $\Fbar (n, k)$'s denominator, substituting $k = \spmoq - 1$ and using \eqref{e-self-nulling} yields
$$\sum_{n=0}^{\spmoq} \Fbar (n, \spmoq) = \Fbar (\spmoq, \spmoq).$$

\noindent
Hence, the proof reduces to showing that
$$\Fbar (\spmoq, \spmoq)
= \frac{(2p-1) \ph{1/4}{2 \spmoq}}{\ph{1}{\spmoq}^2}
\equiv \frac{-p}{\Gamma_p(1/4) \Gamma_p(3/4)} \pmod{p^3}.$$

\noindent
Rearrange using \eqref{e-gamma-p-props-2} in preparation to use Theorem \ref{t-gk-theorems} as follows.  Note that $\ph{1/4}{2 \spmoq}$ has a factor of $p/4$, and no other factors of $p$.  Absorb $1/4 + 2 \spmoq = (2p-1)/4$ into $\Gamma_p(1/4 + 2 \spmoq)$ at the cost of a minus sign.  Recall $\Gamma_p(1)^2 = 1$.  Multiply and divide by $\Gamma_p(3/4)^2$.  In other words,
\begin{equation} \label{e-f2-fbar}
\Fbar (\spmoq, \spmoq)
= \frac{-p}{\Gamma_p \left ( \frac{1}{4} \right ) \Gamma_p \left ( \frac{3}{4} \right )} \cdot \frac{\Gamma_p \left ( \frac{3}{4} + \frac{p}{2} \right )}{\Gamma_p \left ( \frac{3}{4} \right )} \cdot \frac{\Gamma_p \left ( \frac{3}{4} \right )^2}{\Gamma_p \left ( \frac{3}{4} + \frac{p}{4} \right )^2}.
\end{equation}

\noindent
Using Theorem \ref{t-gk-theorems} with $t=1$ gives
\begin{equation} \label{e-f2-fbar-factors}
\frac{\Gamma_p \left ( \frac{3}{4} + \frac{p}{2} \right )}{\Gamma_p \left ( \frac{3}{4} \right )} \cdot \frac{\Gamma_p \left ( \frac{3}{4} \right )^2}{\Gamma_p \left ( \frac{3}{4} + \frac{p}{4} \right )^2}
\equiv \frac{1 + G_1(3/4) \cdot \frac{p}{2} }{ \left ( 1 + G_1(3/4) \cdot \frac{p}{4} \right )^2}
\equiv 1 \pmod{p^2}.
\end{equation}

\noindent
Substituting \eqref{e-f2-fbar-factors} into \eqref{e-f2-fbar} then using \eqref{e-mod-trick} completes the proof.
\end{proof}

\begin{proof}[Proof of Theorem \ref{t-van-hamme}, (G.2)]
The following extends (G.2) modulo $p^4$ as shown in Swisher \cite{Swisher15}.  Note that $p \equiv 1 \pmod{4}$ implies $p \geq 5$.  Using \eqref{e-wz-device-pm}, set
$$F(n, k) = (8n+1) \cdot \frac{\ph{1/4}{n}^3 \ph{1/4}{n+k}}{\ph{1}{n}^3 \ph{1}{n-k}}.$$

\noindent
Running the WZ algorithm on $F(n, k)$ returns a quartic difference operator $\Delta$ of the form \eqref{e-Delta-shape} with polynomial coefficients
\begin{eqnarray*}
p_4(k) & = & 16646400, \\
p_3(k) & = & 49677056 k^2 + 210799808 k + 256053120, \\
p_2(k) & = & 49152768 k^4 + 318509952 k^3 + 812974576 k^2 + 962132816 k + 443357040, \\
p_1(k) & = & 4 (61953 k^3 + 182019 k^2 + 174371 k + 53153) (4 k + 5)^3, \\
p_0(k) & = & -(8 k + 9) (8 k + 5) (4 k + 5)^3  (4 k + 1)^3.
\end{eqnarray*}

\noindent
Theorem \ref{t-wz-pair-polynomial-elimination} does not apply as $\deg \Delta > 1$, so use \eqref{e-deg-transform} to define
$$\Fdeg (n, k) = (8n+1) \cdot \frac{\ph{1/4}{n}^3 \ph{1/4}{n+k}}{\ph{1}{n}^3 \ph{1}{n-k} \ph{1/4}{k}^3}.$$

\noindent
Running the WZ algorithm on $\Fdeg(n, k)$ yields the difference operator
$$\Delta = (2k+1)K + 2.$$

\noindent
Use \eqref{e-t1-qk} to obtain $q(k) = (-1)^k \ph{1/2}{k}$, and following the proof of Theorem \ref{t-wz-pair-polynomial-elimination} define
$$\Fbar (n, k) = (8n+1) (-1)^k \cdot \frac{\ph{1/4}{n}^3 \ph{1/4}{n+k} \ph{1/2}{k}}{\ph{1}{n}^3 \ph{1}{n-k} \ph{1/4}{k}^3},$$

\noindent
whence the WZ algorithm returns $\Delta = K - 1$ and
$$\Gbar (n, k) = - \frac{256 n^3 (n-k) (-1)^k \ph{1/4}{n}^3 \ph{1/4}{n+k} \ph{1/2}{k}}{(4k+1)^3 \ph{1}{n}^3 \ph{1}{n-k} \ph{1/4}{k}^3}.$$

\noindent
It follows that \eqref{e-fbar-wz-pair} holds, and since $\Gbar (0, k) = 0$,
\begin{equation} \label{e-fbar-wz-pair-sum-g2}
\sum_{n=0}^{\spmoq} \Fbar (n, k+1) - \sum_{n=0}^{\spmoq} \Fbar (n, k) = \Gbar (\spmoq + 1, k).
\end{equation}

\noindent
Restricting the values taken by $k$ to $0 \leq k < \spmoq$ prevents unwanted factors of $p$ in the denominator of $\Gbar (n, k)$.  Examining
$$\Gbar (\spmoq+1, k) = - \frac{(p+3)^3 (p + 3 - 4k) (-1)^k \ph{1/4}{\spmoq+1}^3 \ph{1/4}{\spmoq+1+k} \ph{1/2}{k}}{(4k+1)^3 \ph{1}{\spmoq+1}^3 \ph{1}{\spmoq+1-k} \ph{1/4}{k}^3}$$

\noindent
shows that $\Gbar (\spmoq+1, k) \equiv 0 \pmod{p^4}$ since the denominator has no factors of $p$ and the numerator has a factor of $p^4$ due to $\ph{1/4}{\spmoq+1}^3 \ph{1/4}{\spmoq+1+k}$.  Consequently, by \eqref{e-fbar-wz-pair-sum-g2},
$$\sum_{n=0}^{\spmoq} \Fbar (n, k+1) \equiv \sum_{n=0}^{\spmoq} \Fbar (n, k) \pmod{p^4}.$$

\noindent
Due to the self nulling factor $\ph{1}{n-k}$ in $\Fbar (n, k)$'s denominator, substituting $k = \spmoq - 1$ and using \eqref{e-self-nulling} yields
$$\sum_{n=0}^{\spmoq} \Fbar (n, \spmoq) = \Fbar (\spmoq, \spmoq).$$

\noindent
Hence, the proof reduces to showing that
\begin{equation} \label{e-g2-proof-reduction}
\Fbar (\spmoq, \spmoq)
= \frac{(2p-1) (-1)^{\spmoq} \ph{1/4}{2 \spmoq} \ph{1/2}{\spmoq}}{\ph{1}{\spmoq}^3}
\equiv \frac{p \Gamma_p(1/2) \Gamma_p(1/4)}{\Gamma_p(3/4)} \pmod{p^4}.
\end{equation}

\noindent
Before continuing, for reasons that will become apparent shortly, set $p = 4p'+1$ for an appropriate integer $p'$, whence $\spmoq = p'$.  Perform this replacement only for $(-1)^{\spmoq}$ in \eqref{e-g2-proof-reduction}, such that
$$\Fbar (\spmoq, \spmoq) = \frac{(2p-1) (-1)^{p'} \ph{1/4}{2 \spmoq} \ph{1/2}{\spmoq}}{\ph{1}{\spmoq}^3}.$$

\noindent
Now use \eqref{e-gamma-p-props-2} to rearrange in preparation to use Theorem \ref{t-gk-theorems} as follows.  Note that $\ph{1/4}{2 \spmoq}$ has no factor of $p$ except $p/4$.  Absorb $1/4 + 2 \spmoq = (2p-1)/4$ into $\Gamma_p(1/4 + 2 \spmoq)$ at the cost of a minus sign.  Recall $\Gamma_p(1)^3 = -1$, and that also $\Gamma_p(1/2)^2 = (-1)^{(p+1)/2} = -1$ by \eqref{e-gamma-p-props-3} as here $p \equiv 1 \pmod{4}$.  Multiply and divide by $\Gamma_p(3/4)^3$.  In other words,
$$\Fbar (\spmoq, \spmoq)
= p (-1)^{p'+1} \cdot \frac{\Gamma_p \left ( \frac{3}{4} + \frac{p}{2} \right )}{\Gamma_p \left ( \frac{1}{4} \right ) } \cdot \frac{\Gamma_p\left ( \frac{1}{4} + \frac{p}{4} \right )}{\Gamma_p \left ( \frac{1}{2} \right )} \cdot \frac{\Gamma_p \left ( \frac{1}{2} \right )^2}{\Gamma_p \left ( \frac{3}{4} + \frac{p}{4} \right )^3} \cdot \frac{\Gamma_p \left ( \frac{3}{4} \right )^3}{\Gamma_p \left ( \frac{3}{4} \right )^3},$$

\noindent
whence rearranging gives
\begin{equation} \label{e-g2-factors}
\Fbar (\spmoq, \spmoq) = \frac{p (-1)^{p'+1} \Gamma_p(1/2)}{\Gamma_p(3/4)^2} \cdot \frac{\Gamma_p \left ( \frac{3}{4} + \frac{p}{2} \right )}{\Gamma_p \left ( \frac{3}{4} \right )} \cdot \frac{\Gamma_p \left ( \frac{1}{4} + \frac{p}{4} \right )}{\Gamma_p \left ( \frac{1}{4} \right )} \cdot \frac{\Gamma_p \left( \frac{3}{4} \right )^3}{\Gamma_p \left ( \frac{3}{4} + \frac{p}{4} \right )^3}.
\end{equation}

\noindent
The first term in \eqref{e-g2-factors} can be transformed to match the right hand side of \eqref{e-g2-proof-reduction}.  By \eqref{e-gamma-p-reflection} one has that $\Gamma_p(1/4) \Gamma_p(3/4) = (-1)^{a_0(3/4)}$.  Noting that $a_0(3/4) = p'+1$ as $4(p'+1) \equiv 4p'+4 \equiv 3 \pmod{p}$, $$\frac{p (-1)^{p'+1} \Gamma_p(1/2)}{\Gamma_p(3/4)^2}
= \frac{p (-1)^{2(p'+1)} \Gamma_p(1/2) \Gamma_p(1/4)}{\Gamma_p(3/4)}
= \frac{p \Gamma_p(1/2) \Gamma_p(1/4)}{\Gamma_p(3/4)}.$$

\noindent
Hence, by \eqref{e-mod-trick} it suffices to show the rightmost three terms in \eqref{e-g2-factors} are congruent to $1$ modulo $p^3$.  Using Theorem \ref{t-gk-theorems} and \eqref{e-gk-easy-identities}, calculation yields
\begin{eqnarray*}
& & \frac{\Gamma_p \left ( \frac{3}{4} + \frac{p}{2} \right )}{\Gamma_p \left ( \frac{3}{4} \right )} \cdot \frac{\Gamma_p \left ( \frac{1}{4} + \frac{p}{4} \right )}{\Gamma_p \left ( \frac{1}{4} \right )} \cdot \frac{\Gamma_p \left ( \frac{3}{4} \right )^3}{\Gamma_p \left ( \frac{3}{4} + \frac{p}{4} \right )^3} \\
& \equiv & \frac{ \left ( 1 + G_1(3/4) \cdot \frac{p}{2} + G_2(3/4) \cdot \frac{p^2}{8} \right ) \left ( 1 + G_1(1/4) \cdot \frac{p}{4} + G_2(1/4) \cdot \frac{p^2}{32} \right )}{\left ( 1 + G_1(3/4) \cdot \frac{p}{4} + G_2(3/4) \cdot \frac{p^2}{32} \right)^3} \\
& \equiv & \frac{1 + G_1(3/4) \cdot \frac{3p}{4} + G_2(1/4) \cdot \frac{3p^2}{32} + G_2(3/4) \cdot \frac{3p^2}{16}}{1 + G_1(3/4) \cdot \frac{3p}{4} + G_2(1/4) \cdot \frac{3p^2}{32} + G_2(3/4) \cdot \frac{3p^2}{16}} \pmod{p^3},
\end{eqnarray*}

\noindent
which is $1$ modulo $p^3$ as desired.  This completes the proof.
\end{proof}

So far, these are WZ method proofs of supercongruences originally solved by other methods.  This approach can also streamline existing WZ method proofs, such as Zudilin's original proof of (B.2) \cite{Zudilin09}.

\begin{proof}[Proof of Theorem \ref{t-van-hamme}, (B.2)]
A quick calculation shows the congruence holds for $p = 3$, so let $p \geq 5$.  Using \eqref{e-wz-device-pm}, set
\begin{equation} \label{e-b2-generalize}
F(n, k) = \frac{(4n+1)(-1)^n \ph{1/2}{n}^2 \ph{1/2}{n+k}}{\ph{1}{n}^2 \ph{1}{n-k}}.
\end{equation}

\noindent
Running the WZ algorithm on $F(n, k)$ returns a cubic difference operator $\Delta$ of the form \eqref{e-Delta-shape} with polynomial coefficients
\begin{eqnarray*}
p_3(k) & = & 648, \\
p_2(k) & = & 1552 k^2 + 5816 k + 5796, \\
p_1(k) & = & 2 (145 k^2 + 386 k + 285) (2k + 3)^2 , \\
p_0(k) & = & (4k + 5) (4k + 3) (2k + 3)^2  (2k + 1)^2.
\end{eqnarray*}

\noindent
Theorem \ref{t-wz-pair-polynomial-elimination} does not apply as $\deg \Delta > 1$, so use \eqref{e-deg-transform} to define
$$\Fdeg (n, k) = \frac{(4n+1)(-1)^n \ph{1/2}{n}^2 \ph{1/2}{n+k}}{\ph{1}{n}^2 \ph{1}{n-k} \ph{1/2}{k}^2}.$$

\noindent
Running the WZ algorithm on $\Fdeg(n, k)$ yields the difference operator $\Delta = K + 1$.  Use \eqref{e-t1-qk} to obtain $q(k) = (-1)^k$, and following the proof of Theorem \ref{t-wz-pair-polynomial-elimination} define
$$\Fbar (n,k) = \frac{(4n+1)(-1)^{n+k} \ph{1/2}{n}^2 \ph{1/2}{n+k}}{\ph{1}{n}^2 \ph{1}{n-k} \ph{1/2}{k}^2},$$

\noindent
whence the WZ algorithm provides $\Delta = K - 1$ and
$$\Gbar (n, k) = \frac{8n^2 (n-k) (-1)^{n+k} \ph{1/2}{n}^2 \ph{1/2}{n+k}}{(2k+1)^2 \ph{1}{n}^2 \ph{1}{n-k} \ph{1/2}{k}^2}.$$

\noindent
It follows that \eqref{e-fbar-wz-pair} holds, and since $\Gbar (0, k) = 0$,
\begin{equation} \label{e-fbar-wz-pair-sum-b2}
\sum_{n=0}^{\spmoh} \Fbar (n, k+1) - \sum_{n=0}^{\spmoh} \Fbar (n, k) = \Gbar (\spmoh + 1, k).
\end{equation}

\noindent
Restricting the values taken by $k$ to $0 \leq k < \spmoh$ prevents unwanted factors of $p$ in the denominator of $\Gbar (n, k)$.  Examining
$$\Gbar (\spmoh+1, k) = \frac{(p+1)^2 (p+ 1 - 2k) (-1)^{\spmoh+1+k} \ph{1/2}{\spmoh+1}^2 \ph{1/2}{\spmoh+1+k}}{(2k+1)^2 \ph{1}{\spmoh+1}^2 \ph{1}{\spmoh+1-k} \ph{1/2}{k}^2}.$$

\noindent
shows that $\Gbar (\spmoh+1, k) \equiv 0 \pmod{p^3}$ since the denominator has no factors of $p$ and the numerator has a factor of $p^3$ due to $\ph{1/2}{\spmoh+1}^2 \ph{1/2}{\spmoh+1+k}$.  Consequently, by \eqref{e-fbar-wz-pair-sum-b2},
$$\sum_{n=0}^{\spmoh} \Fbar (n, k+1) \equiv \sum_{n=0}^{\spmoh} \Fbar (n, k) \pmod{p^3}.$$

\noindent
Due to the self nulling factor $\ph{1}{n-k}$ in $\Fbar (n, k)$'s denominator, substituting $k = \spmoh - 1$ and using \eqref{e-self-nulling} yields
$$\sum_{n=0}^{\spmoh} \Fbar (n, \spmoh) = \Fbar (\spmoh, \spmoh).$$

\noindent
So, it suffices to calculate $\Fbar (\spmoh, \spmoh)$ modulo $p^3$.  By Lemma \ref{l-ph-gp-identities-H}, \eqref{e-gamma-p-props-2}, and Lemma \ref{l-h2-gp-quotient-p3},
$$\Fbar (\spmoh, \spmoh)
= \frac{-p}{\Gamma_p(1/2)^2} \cdot \frac{\Gamma_p(1/2 + p)}{\Gamma_p(1/2)} \cdot \frac{\Gamma_p(1/2)^2}{\Gamma_p\left ( \frac{1}{2} + \frac{p}{2} \right )^2}
\equiv \frac{-p}{\Gamma_p(1/2)^2} \pmod{p^3}.$$

\noindent
This completes the proof.
\end{proof}

\section{WZ devices} \label{s-wz-devices}

Suppose the WZ algorithm returns a difference operator $\Delta$ of the form \eqref{e-Delta-shape} with $\deg \Delta > 1$ for a certain $F(n, k)$.  The factor required to change $F(n, k)$ toward degree collapse as in Section \ref{s-applications} may not divide $p_0(k)$.  Sometimes these factors can be solved for without too much difficulty.  Consider the proof of (B.2) given in Section \ref{s-applications}.  Running the WZ algorithm on $F(n, k)$ from \eqref{e-b2-generalize} returns a cubic difference operator $\Delta$.  Hoping to find suitable factors, construct the $\Fdeg (n, k)$ ansatz
$$\Fdeg (n,k) = \frac{(4n+1)(-1)^{n+k} \ph{1/2}{n}^2 \ph{1/2}{n+k}}{\ph{1}{n}^2 \ph{1}{n-k} \ph{a/b}{k} \ph{c/d}{k}}.$$

\noindent
for suitable integer variables $a, b, c, d$.  Running the WZ algorithm on $\Fdeg (n, k)$ responds with
$$\Delta = 4(bk+a)(dk+c) K - bd (2k+1)^2,$$

\noindent
from which \eqref{e-t1-qk} in the proof of Theorem \ref{t-wz-pair-polynomial-elimination} says to define
$$\Fbar(n, k)
= \Fdeg (n, k) q(k)
= \Fdeg (n, k) \cdot \frac{\ph{a/b}{k} \ph{c/d}{k}}{\ph{1/2}{k}^2}
= F(n, k) \cdot \frac{1}{\ph{1/2}{k}^2}.$$

\noindent
Running the WZ algorithm on $\Fbar(n, k)$ returns $\Delta = K - 1$, and $\Fbar(n, 0) = F(n, 0)$.  Note the resulting $\Fbar(n, k)$ does not depend on $a, b, c, d$.  So, given an initial ansatz close enough to the shape required to trigger degree collapse, the function $q(k)$ may be able to substitute the vital factor for the unhelpful bits.  Hence, in a sense, there are few ways to change $F(n, k)$ so that the WZ algorithm returns a linear difference operator.  Of course, there are no guarantees that a tentative ansatz will trigger the intended degree collapse as the example just shown.

Likewise, suppose that for a certain $F(n, k)$, hypergeometric in both $n$ and $k$, the WZ algorithm returns a difference operator $\Delta$ of the form \eqref{e-Delta-shape} with $d = \deg \Delta > 1$ and hence Theorem \ref{t-wz-pair-polynomial-elimination} does not apply.  Assuming there is no division by zero, define
$$q(n, k) = \frac{p_1(k)}{p_0(k)} + \sum_{j=2}^d \frac{p_j(k)}{p_0(k)} \cdot \frac{F(n, k+j)}{F(n, k+1)},$$

\noindent
whence by \eqref{e-gen-wz-pair}
$$\frac{\Delta F(n, k)}{p_0(k)}
= F(n, k+1) q(n, k) + F(n, k)
= \frac{G(n+1, k)}{p_0(k)} - \frac{G(n, k)}{p_0(k)}.$$

\noindent
While $q(n, k)$ is rational in $k$, it likely depends on $n$.  So, setting $\Fdeg (n, k) = F(n, k) q(n, k) / q(n, 0)$ may not help the WZ algorithm return a linear difference operator $\Delta$, and recovering the original $F(n, k)$ summand could be difficult even if $\deg \Delta = 1$.  Notwithstanding, coupling these observations with the ideas behind Theorem \ref{t-wz-pair-polynomial-elimination} suggest the following definition.

\begin{definition}
Take $F(n)$, hypergeometric in $n$, and let $n$ take integer values in the possibly infinite interval $[s, t]$.  The function $w(n, k)$ is called a WZ device for $F(n)$ if it satisfies the following three conditions.
\begin{enumerate}
\item The function $w(n, k)$ is hypergeometric in $n$ and $k$.

\item The identity $w(n, 0) = 1$ holds for $n \in [s, t]$.

\item Running the WZ algorithm on $F(n) w(n, k)$ returns a linear difference operator $\Lambda$.
\end{enumerate}

\noindent
In addition, if $\mathbb{F} \subseteq \C$ is a field and the polynomial coefficients of $\Lambda$ split into linear factors in $\mathbb{F}[k]$, then the WZ device $w(n, k)$ is said to be splitting over $\mathbb{F}$.
\end{definition}

This definition encapsulates the creativity needed to find WZ pairs for summands such as those in \eqref{e-van-hamme}.  Let $F(n)$ be such a summand, and set $[t, s] = [0, \spmox{d}]$.  A WZ device of special interest here is a function $w(n, k)$, hypergeometric in both $n$ and $k$, satisfying $w(n, 0) = 1$ for $0 \leq n \leq \spmox{d}$, and such that the WZ algorithm returns $\Delta = K - 1$ when run on
$$\Fbar (n, k) = F(n) w(n, k).$$

\noindent
Note that, for a fixed $F(n)$, the WZ devices that split over $\mathbb{F}$ are essentially equivalent: they differ by a ratio of suitable $w(n, k)$ functions, assuming no division by zero.

The following proof of (D.2) exemplifies a WZ device pattern that does not arise from a starting application of \eqref{e-wz-device-pm}.

\begin{proof}[Proof of Theorem \ref{t-van-hamme}, (D.2)]
As $p \equiv 1 \pmod{6}$, assume $p \geq 5$.  Moreover, let $p = 6p'+1$ for an appropriate integer $p'$, and unlike in \eqref{e-wz-device-pm} define
$$F(n, k) = (6n+1) \cdot \frac{\ph{1/3}{n}^4 \ph{1/3}{n+k} \ph{1/3}{n-k}}{\ph{1}{n}^4 \ph{1}{n-k} \ph{1}{n+k}}.$$

\noindent
Running the WZ algorithm for $F(n, k)$ returns the difference operator
$$\Delta = (3k+2)^4 K - (3k+1)^4.$$

\noindent
Use \eqref{e-t1-qk} to obtain $q(k) = \ph{2/3}{k}^4 / \ph{1/3}{k}^4$, and following the proof of Theorem \ref{t-wz-pair-polynomial-elimination} define
$$\Fbar (n, k) = (6n+1) \cdot \frac{\ph{1/3}{n}^4 \ph{1/3}{n+k} \ph{1/3}{n-k} \ph{2/3}{k}^4}{\ph{1}{n}^4 \ph{1}{n-k} \ph{1}{n+k} \ph{1/3}{k}^4},$$

\noindent
whence the WZ algorithm returns $\Delta = K - 1$ and
$$\Gbar (n, k) = \frac{729 (2k+1) n^4 (n-k) \ph{1/3}{n}^4 \ph{1/3}{n+k} \ph{1/3}{n-k} \ph{2/3}{k}^4}{(3k+2-3n) (3k+1)^4 \ph{1}{n}^4 \ph{1}{n-k} \ph{1}{n+k} \ph{1/3}{k}^4}.$$

\noindent
It follows that \eqref{e-fbar-wz-pair} holds, and since $\Gbar (0, k) = 0$,
\begin{equation} \label{e-fbar-wz-pair-sum-d2}
\sum_{n=0}^{\spmot} \Fbar (n, k+1) - \sum_{n=0}^{\spmot} \Fbar (n, k) = \Gbar (\spmot + 1, k).
\end{equation}

\noindent
Restricting the values taken by $k$ to $0 \leq k < \spmot$ prevents factors of $p$ in the denominator of $\Gbar (n, k)$.  A quick analysis shows
$$\Gbar (\spmot+1, k) = \frac{3 (2k+1) (p+2)^4 (p+2-3k) \ph{1/3}{\spmot+1}^4 \ph{1/3}{\spmot+1+k} \ph{1/3}{\spmot+1-k} \ph{2/3}{k}^4}{(3k-p) (3k+1)^4 \ph{1}{\spmot+1}^4 \ph{1}{\spmot+1-k} \ph{1}{\spmot+1+k} \ph{1/3}{k}^4}$$

\noindent
has a net factor of $p^5$ in its numerator, and so $\Gbar (\spmot+1, k) \equiv 0 \pmod{p^5}$.  Consequently, by \eqref{e-fbar-wz-pair-sum-d2},
$$\sum_{n=0}^{\spmot} \Fbar (n, k+1) \equiv \sum_{n=0}^{\spmot} \Fbar (n, k) \pmod{p^5}.$$

\noindent
Due to the self nulling factor $\ph{1}{n-k}$ in $\Fbar (n, k)$'s denominator, substituting $k = \spmot-1$ and using \eqref{e-self-nulling} yields
$$\sum_{n=0}^{\spmot} \Fbar (n, \spmot) = \Fbar (\spmot, \spmot) = \frac{(2p-1) \ph{1/3}{2 \spmot} \ph{2/3}{\spmot}^4}{\ph{1}{\spmot}^4 \ph{1}{2 \spmot}}.$$

\noindent
It suffices to examine $\Fbar (\spmot, \spmot)$.  Operate as shown below using \eqref{e-gamma-p-props-2} to obtain
$$\Fbar (\spmot, \spmot)
= 3 \cdot \frac{(2p-1)}{3} \cdot \frac{\ph{1/3}{2 \spmot} \ph{2/3}{\spmot}^4}{\ph{1}{\spmot}^4 \ph{1}{2 \spmot}}
= 3 \cdot \frac{\ph{1/3}{2 \spmot + 1} \ph{2/3}{\spmot}^4}{\ph{1}{\spmot}^4 \ph{1}{2 \spmot}}.$$

\noindent
Rewrite $\Fbar (\spmot, \spmot)$ in terms of $\Gamma_p$.  Use that $2 \spmot + 1 = 4p'+1$ and $\Gamma_p(1) = -1$ to simplify the powers of $-1$ due to \eqref{e-gamma-p-props-2}.  Since $p = 6p' + 1$, converting $\ph{1/3}{2 \spmot + 1}$ with \eqref{e-gamma-p-props-2} yields an extra factor of $p/3$.  Finally, multiply and divide by appropriate evaluations of $\Gamma_p$ to later apply Theorem \ref{t-gk-theorems} and get
$$\Fbar (\spmot, \spmot)
= p \cdot \frac{\Gamma_p \left ( \frac{2}{3} + \frac{2p}{3} \right )}{\Gamma_p \left ( \frac{2}{3} \right )} \cdot \frac{\Gamma_p \left ( \frac{1}{3} + \frac{p}{3} \right )^4}{\Gamma_p \left ( \frac{1}{3} \right )^4} \cdot \frac{\Gamma_p \left ( \frac{2}{3} \right )^4}{\Gamma_p \left ( \frac{2}{3} + \frac{p}{3} \right )^4} \cdot \frac{\Gamma_p \left ( \frac{1}{3} \right )}{\Gamma_p \left ( \frac{1}{3} + \frac{2p}{3} \right )} \cdot \frac{\Gamma_p \left ( \frac{1}{3} \right )^2}{\Gamma_p \left ( \frac{2}{3} \right )^7}.$$

\noindent
Observe that $\Gamma_p(1/3) \Gamma_p(2/3) = (-1)^{a_0(2/3)} = -1$ by \eqref{e-gamma-p-reflection} and \eqref{e-a0-2o3-1mod6}.  Rearrangement yields
\begin{equation} \label{e-d2-factors}
\Fbar (\spmot, \spmot) = -p \Gamma_p(1/3)^9 \cdot \frac{\Gamma_p \left ( \frac{2}{3} + \frac{2p}{3} \right )}{\Gamma_p \left ( \frac{2}{3} \right )} \cdot \frac{\Gamma_p \left ( \frac{1}{3} + \frac{p}{3} \right )^4}{\Gamma_p \left ( \frac{1}{3} \right )^4} \cdot \frac{\Gamma_p \left ( \frac{2}{3} \right )^4}{\Gamma_p \left ( \frac{2}{3} + \frac{p}{3} \right )^4} \cdot \frac{\Gamma_p \left ( \frac{1}{3} \right )}{\Gamma_p \left ( \frac{1}{3} + \frac{2p}{3} \right )}.
\end{equation}

\noindent
Expand the rightmost four factors in \eqref{e-d2-factors} modulo $p^3$ via Theorem \ref{t-gk-theorems}, using \eqref{e-gk-binomial-4} as needed, to obtain
\begin{eqnarray*}
\frac{\Gamma_p \left ( \frac{2}{3} + \frac{2p}{3} \right )}{\Gamma_p \left ( \frac{2}{3} \right )} & \equiv & 1 + G_1(2/3) \cdot \frac{2p}{3} + G_2(2/3) \cdot \frac{2p^2}{9} \pmod{p^3}, \\
\frac{\Gamma_p \left ( \frac{1}{3} + \frac{p}{3} \right )^4}{\Gamma_p \left( \frac{1}{3} \right )^4} & \equiv & 1 + G_1(1/3) \cdot \frac{4p}{3} + G_1(1/3)^2 \cdot \frac{2p^2}{3} + G_2(1/3) \cdot \frac{2p^2}{9} \pmod{p^3}, \\
\frac{\Gamma_p \left ( \frac{2}{3} \right )^4}{\Gamma_p \left ( \frac{2}{3} + \frac{p}{3} \right )^4} & \equiv & \frac{1}{1 + G_1(2/3) \cdot \frac{4p}{3} + G_1(2/3)^2 \cdot \frac{2p^2}{3} + G_2(2/3) \cdot \frac{2p^2}{9}} \pmod{p^3}, \\
\frac{\Gamma_p \left ( \frac{1}{3} \right )}{\Gamma_p \left ( \frac{1}{3} + \frac{2p}{3} \right )} & \equiv & \frac{1}{1 + G_1(1/3) \cdot \frac{2p}{3} + G_2(1/3) \cdot \frac{2p^2}{9}} \pmod{p^3}.
\end{eqnarray*}

\noindent
Multiplying these factors together forms a quotient.  Compute this quotient modulo $p^3$, then apply \eqref{e-gk-easy-identities} to make the evaluations of $G_1$ match to find both numerator and denominator are
$$1 + G_1(1/3) \cdot 2p + G_1(1/3)^2 \cdot \frac{14p^2}{9} + G_2(1/3) \cdot \frac{2p^2}{9} + G_2(2/3) \cdot \frac{2p^2}{9},$$

\noindent
whence the fraction amounts to $1$ modulo $p^3$.  Substitute this congruence in \eqref{e-d2-factors}, then use \eqref{e-mod-trick} to deduce $\Fbar (\spmot, \spmot) \equiv -p \Gamma_p(1/3)^9 \pmod{p^4}$, completing the proof.
\end{proof}

In summary, the proofs in the present paper rely on the WZ devices listed in Table \ref{table-wz-devices}.

\begin{table}[ht]
\caption{List of WZ devices used in this paper.} \label{table-wz-devices}
\begin{tabular}{r|l}
Van Hamme supercongruence & WZ device \\
\hline
\hlineFix
(B.2) & $\frac{\ph{1}{n} \ph{1/2}{n+k}}{\ph{1/2}{n} \ph{1}{n-k}} \cdot \frac{(-1)^k}{\ph{1/2}{k}^2}$ \\
\hlineFix
(C.2) & $\frac{\ph{1}{n} \ph{1/2}{n+k}}{\ph{1/2}{n} \ph{1}{n-k}} \cdot \frac{(-1)^k \ph{1}{k}}{\ph{1/2}{k}^3}$ \\
\hlineFix
(E.2) & $\frac{\ph{1}{n} \ph{1/3}{n+k}}{\ph{1/3}{n} \ph{1}{n-k}} \cdot \frac{(-1)^k}{\ph{1/3}{k}^2}$ \\
\hlineFix
(F.2) & $\frac{\ph{1}{n} \ph{1/4}{n+k}}{\ph{1/4}{n} \ph{1}{n-k}} \cdot \frac{(-1)^k}{\ph{1/4}{k}^2}$ \\
\hlineFix
(G.2) & $\frac{\ph{1}{n} \ph{1/4}{n+k}}{\ph{1/4}{n} \ph{1}{n-k}} \cdot \frac{(-1)^k \ph{1/2}{k}}{\ph{1/4}{k}^3}$ \\
\hlineFix
(H.2) & $\frac{\ph{1}{n} \ph{1/2}{n+k}}{\ph{1/2}{n} \ph{1}{n-k}} \cdot \frac{(-1)^k \ph{3/4}{k}}{\ph{1/4}{k} \ph{1/2}{k}^2}$ \\
\hline
\hlineFix
(D.2) & $\frac{\ph{1}{n}^2 \ph{1/3}{n+k} \ph{1/3}{n-k}}{\ph{1/3}{n}^2 \ph{1}{n-k} \ph{1}{n+k}} \cdot \frac{\ph{2/3}{k}^4}{\ph{1/3}{k}^4}$
\end{tabular}
\end{table}

\raggedright
\bibliographystyle{plain}
\bibliography{project-bibliography}

@book {AeqB,
    AUTHOR = {Petkov\v{s}ek, Marko and Wilf, Herbert S. and Zeilberger,
              Doron},
     TITLE = {{$A=B$}},
      NOTE = {With a foreword by Donald E. Knuth,
              With a separately available computer disk},
 PUBLISHER = {A K Peters, Ltd., Wellesley, MA},
      YEAR = {1996},
     PAGES = {xii+212},
      ISBN = {1-56881-063-6},
   MRCLASS = {05-01 (05A10 05A19 33C20 68R05)},
  MRNUMBER = {1379802},
MRREVIEWER = {Peter\ Paule},
}

@article {Beukers,
    AUTHOR = {Beukers, Frits},
     TITLE = {Supercongruences using modular forms},
   JOURNAL = {Preprint, arXiv:2403.03301 [math.NT]},
       URL = {https://arxiv.org/abs/2403.03301},
			YEAR = {2025},
}

@book {Concrete,
    AUTHOR = {Graham, Ronald L. and Knuth, Donald E. and Patashnik, Oren},
     TITLE = {Concrete mathematics},
   EDITION = {37\textsuperscript{th} printing, Second},
      NOTE = {A foundation for computer science},
 PUBLISHER = {Addison-Wesley Publishing Company, Reading, MA},
      YEAR = {1994},
     PAGES = {xiv+657},
      ISBN = {0-201-55802-5},
   MRCLASS = {68-01 (00-01 00A05 05-01 68Rxx)},
  MRNUMBER = {1397498},
MRREVIEWER = {Volker\ Strehl},
}

@article {Guillera19,
    AUTHOR = {Guillera, Jes\'us},
     TITLE = {On {WZ}-pairs which prove {R}amanujan series},
   JOURNAL = {Ramanujan J.},
  FJOURNAL = {Ramanujan Journal. An International Journal Devoted to the
              Areas of Mathematics Influenced by Ramanujan},
    VOLUME = {22},
      YEAR = {2010},
    NUMBER = {3},
     PAGES = {249--259},
      ISSN = {1382-4090,1572-9303},
   MRCLASS = {33F10 (05A10 11B65 11Y60 33C20)},
  MRNUMBER = {2670974},
MRREVIEWER = {Shaun\ Cooper},
       DOI = {10.1007/s11139-010-9238-1},
       URL = {https://doi.org/10.1007/s11139-010-9238-1},
}

@article {qa-A2,
    AUTHOR = {Guo, Victor J. W.},
     TITLE = {A new {${q}$}-analogue of {V}an {H}amme's ({A}.2)
              supercongruence},
   JOURNAL = {Bull. Aust. Math. Soc.},
  FJOURNAL = {Bulletin of the Australian Mathematical Society},
    VOLUME = {107},
      YEAR = {2023},
    NUMBER = {1},
     PAGES = {22--30},
      ISSN = {0004-9727,1755-1633},
   MRCLASS = {11B65 (11A07 33D15)},
  MRNUMBER = {4531687},
       DOI = {10.1017/S0004972722000478},
       URL = {https://doi.org/10.1017/S0004972722000478},
}

@article {qa-B2,
    AUTHOR = {Guo, Victor J. W.},
     TITLE = {Proof of a generalization of the ({B}.2) supercongruence of
              {V}an {H}amme through a {$q$}-microscope},
   JOURNAL = {Adv. in Appl. Math.},
  FJOURNAL = {Advances in Applied Mathematics},
    VOLUME = {116},
      YEAR = {2020},
     PAGES = {102016, 19},
      ISSN = {0196-8858,1090-2074},
   MRCLASS = {33D15 (11A07 11F33)},
  MRNUMBER = {4064779},
MRREVIEWER = {Gaurav\ Bhatnagar},
       DOI = {10.1016/j.aam.2020.102016},
       URL = {https://doi.org/10.1016/j.aam.2020.102016},
}

@article {qa-C2,
    AUTHOR = {Guo, Victor J. W.},
     TITLE = {Proof of a generalization of the ({C}.2) supercongruence of
              {V}an {H}amme},
   JOURNAL = {Rev. R. Acad. Cienc. Exactas F\'is. Nat. Ser. A Mat. RACSAM},
  FJOURNAL = {Revista de la Real Academia de Ciencias Exactas, F\'isicas y
              Naturales. Serie A. Matematicas. RACSAM},
    VOLUME = {115},
      YEAR = {2021},
    NUMBER = {2},
     PAGES = {Paper No. 45, 13},
      ISSN = {1578-7303,1579-1505},
   MRCLASS = {11B65 (11A07 33D15)},
  MRNUMBER = {4197436},
MRREVIEWER = {Laurent\ Habsieger},
       DOI = {10.1007/s13398-020-00991-4},
       URL = {https://doi.org/10.1007/s13398-020-00991-4},
}

@article {GuoWang-E2F2,
    AUTHOR = {Guo, Victor J. W. and Wang, Chen},
     TITLE = {Refinements of {V}an {H}amme's ({E}.2) and ({F}.2)
              supercongruences and two supercongruences by {S}wisher},
   JOURNAL = {J. Math. Anal. Appl.},
  FJOURNAL = {Journal of Mathematical Analysis and Applications},
    VOLUME = {551},
      YEAR = {2025},
    NUMBER = {1},
     PAGES = {Paper No. 129673, 13},
      ISSN = {0022-247X,1096-0813},
   MRCLASS = {11B65 (11A07 33D15)},
  MRNUMBER = {4907496},
       DOI = {10.1016/j.jmaa.2025.129673},
       URL = {https://doi.org/10.1016/j.jmaa.2025.129673},
}

@article {GuoZudilinMicroscope,
    AUTHOR = {Guo, Victor J. W. and Zudilin, Wadim},
     TITLE = {A {$q$}-microscope for supercongruences},
   JOURNAL = {Adv. Math.},
  FJOURNAL = {Advances in Mathematics},
    VOLUME = {346},
      YEAR = {2019},
     PAGES = {329--358},
      ISSN = {0001-8708,1090-2082},
   MRCLASS = {11F33 (11B65 11Y60 33C20 33D15)},
  MRNUMBER = {3910798},
MRREVIEWER = {B.\ Sury},
       DOI = {10.1016/j.aim.2019.02.008},
       URL = {https://doi.org/10.1016/j.aim.2019.02.008},
}

@article {He15,
    AUTHOR = {He, Bing},
     TITLE = {Some congruences on truncated hypergeometric series},
   JOURNAL = {Proc. Amer. Math. Soc.},
  FJOURNAL = {Proceedings of the American Mathematical Society},
    VOLUME = {143},
      YEAR = {2015},
    NUMBER = {12},
     PAGES = {5173--5180},
      ISSN = {0002-9939,1088-6826},
   MRCLASS = {11A07 (33C05 33C20)},
  MRNUMBER = {3411135},
MRREVIEWER = {Michael\ Th.\ Rassias},
       DOI = {10.1090/proc/12695},
       URL = {https://doi.org/10.1090/proc/12695},
}

@article {qa-D2,
    AUTHOR = {He, Haihong and Wang, Xiaoxia},
     TITLE = {Some congruences that extend {V}an {H}amme's ({D}.2)
              supercongruence},
   JOURNAL = {J. Math. Anal. Appl.},
  FJOURNAL = {Journal of Mathematical Analysis and Applications},
    VOLUME = {527},
      YEAR = {2023},
    NUMBER = {1},
     PAGES = {Paper No. 127344, 9},
      ISSN = {0022-247X,1096-0813},
   MRCLASS = {11B65 (11A07 33C15)},
  MRNUMBER = {4589204},
       DOI = {10.1016/j.jmaa.2023.127344},
       URL = {https://doi.org/10.1016/j.jmaa.2023.127344},
}

@article {JanaKarmakar,
    AUTHOR = {Jana, Arijit and Karmakar, Liton},
     TITLE = {Generalization of some supercongruences related to ({B}.2) and
              ({C}.2) supercongruences of {V}an{H}amme},
   JOURNAL = {Ramanujan J.},
  FJOURNAL = {Ramanujan Journal. An International Journal Devoted to the
              Areas of Mathematics Influenced by Ramanujan},
    VOLUME = {67},
      YEAR = {2025},
    NUMBER = {3},
     PAGES = {Paper No. 54, 13},
      ISSN = {1382-4090,1572-9303},
   MRCLASS = {11B65 (11A07 33B15 33C20)},
  MRNUMBER = {4911004},
       DOI = {10.1007/s11139-025-01099-2},
       URL = {https://doi.org/10.1007/s11139-025-01099-2},
}

@article {Kilbourn,
    AUTHOR = {Kilbourn, Timothy},
     TITLE = {An extension of the {A}p\'ery number supercongruence},
   JOURNAL = {Acta Arith.},
  FJOURNAL = {Acta Arithmetica},
    VOLUME = {123},
      YEAR = {2006},
    NUMBER = {4},
     PAGES = {335--348},
      ISSN = {0065-1036,1730-6264},
   MRCLASS = {11F33 (11F23 11G40 11S80 14J32)},
  MRNUMBER = {2262248},
MRREVIEWER = {Noriko\ Yui},
       DOI = {10.4064/aa123-4-3},
       URL = {https://doi.org/10.4064/aa123-4-3},
}

@book {Koblitz,
    AUTHOR = {Koblitz, Neal},
     TITLE = {{$p$}-adic numbers, {$p$}-adic analysis, and zeta-functions},
    SERIES = {Graduate Texts in Mathematics},
    VOLUME = {Vol. 58},
 PUBLISHER = {Springer-Verlag, New York-Heidelberg},
      YEAR = {1977},
     PAGES = {x+122},
      ISBN = {0-387-90274-0},
   MRCLASS = {12B20 (12B40 14G10 14G20)},
  MRNUMBER = {466081},
MRREVIEWER = {Daniel\ Barsky},
}

@article {Liu,
    AUTHOR = {Liu, Ji-Cai},
     TITLE = {On {V}an {H}amme's ({A}.2) and ({H}.2) supercongruences},
   JOURNAL = {J. Math. Anal. Appl.},
  FJOURNAL = {Journal of Mathematical Analysis and Applications},
    VOLUME = {471},
      YEAR = {2019},
    NUMBER = {1-2},
     PAGES = {613--622},
      ISSN = {0022-247X,1096-0813},
   MRCLASS = {11P83 (11S80 33C05 33E50)},
  MRNUMBER = {3906342},
MRREVIEWER = {Song\ Heng\ Chan},
       DOI = {10.1016/j.jmaa.2018.10.095},
       URL = {https://doi.org/10.1016/j.jmaa.2018.10.095},
}

@article {qa-G2,
    AUTHOR = {Liu, Yudong and Wang, Xiaoxia},
     TITLE = {{$q$}-analogues of the ({G}.2) supercongruence of {V}an
              {H}amme},
   JOURNAL = {Rocky Mountain J. Math.},
  FJOURNAL = {The Rocky Mountain Journal of Mathematics},
    VOLUME = {51},
      YEAR = {2021},
    NUMBER = {4},
     PAGES = {1329--1340},
      ISSN = {0035-7596,1945-3795},
   MRCLASS = {33D15 (11A07 11B65)},
  MRNUMBER = {4298850},
MRREVIEWER = {Warren\ P.\ Johnson},
       DOI = {10.1216/rmj.2021.51.1329},
       URL = {https://doi.org/10.1216/rmj.2021.51.1329},
}

@article {Long,
    AUTHOR = {Long, Ling},
     TITLE = {Hypergeometric evaluation identities and supercongruences},
   JOURNAL = {Pacific J. Math.},
  FJOURNAL = {Pacific Journal of Mathematics},
    VOLUME = {249},
      YEAR = {2011},
    NUMBER = {2},
     PAGES = {405--418},
      ISSN = {0030-8730,1945-5844},
   MRCLASS = {33C20 (11B65)},
  MRNUMBER = {2782677},
MRREVIEWER = {Robert\ B.\ Osburn},
       DOI = {10.2140/pjm.2011.249.405},
       URL = {https://doi.org/10.2140/pjm.2011.249.405},
}

@article {LongRama,
    AUTHOR = {Long, Ling and Ramakrishna, Ravi},
     TITLE = {Some supercongruences occurring in truncated hypergeometric
              series},
   JOURNAL = {Adv. Math.},
  FJOURNAL = {Advances in Mathematics},
    VOLUME = {290},
      YEAR = {2016},
     PAGES = {773--808},
      ISSN = {0001-8708,1090-2082},
   MRCLASS = {33C20 (33E50)},
  MRNUMBER = {3451938},
MRREVIEWER = {Rupam\ Barman},
       DOI = {10.1016/j.aim.2015.11.043},
       URL = {https://doi.org/10.1016/j.aim.2015.11.043},
}

@article {MaoWen19,
    AUTHOR = {Mao, Guo-Shuai and Wen, Chen-Wei},
     TITLE = {Proof of two supercongruences conjectured by {Z}. {W}. {S}un},
   JOURNAL = {The Ramanujan Journal},
  FJOURNAL = {The Ramanujan Journal},
    VOLUME = {56},
      YEAR = {2021},
     PAGES = {597--612},
       DOI = {10.1007/s11139-021-00400-3},
}

@article {McCarthyOsburn,
    AUTHOR = {McCarthy, Dermot and Osburn, Robert},
     TITLE = {A {$p$}-adic analogue of a formula of {R}amanujan},
   JOURNAL = {Arch. Math. (Basel)},
  FJOURNAL = {Archiv der Mathematik},
    VOLUME = {91},
      YEAR = {2008},
    NUMBER = {6},
     PAGES = {492--504},
      ISSN = {0003-889X,1420-8938},
   MRCLASS = {11S80 (33C20)},
  MRNUMBER = {2465868},
MRREVIEWER = {Scott\ Ahlgren},
       DOI = {10.1007/s00013-008-2828-0},
       URL = {https://doi.org/10.1007/s00013-008-2828-0},
}

@article {Mortenson,
    AUTHOR = {Mortenson, Eric},
     TITLE = {A {$p$}-adic supercongruence conjecture of van {H}amme},
   JOURNAL = {Proc. Amer. Math. Soc.},
  FJOURNAL = {Proceedings of the American Mathematical Society},
    VOLUME = {136},
      YEAR = {2008},
    NUMBER = {12},
     PAGES = {4321--4328},
      ISSN = {0002-9939,1088-6826},
   MRCLASS = {11S80 (33C20 33E50)},
  MRNUMBER = {2431046},
MRREVIEWER = {Scott\ Ahlgren},
       DOI = {10.1090/S0002-9939-08-09389-1},
       URL = {https://doi.org/10.1090/S0002-9939-08-09389-1},
}

@article {OsburnZudilin15,
    AUTHOR = {Osburn, Robert and Zudilin, Wadim},
     TITLE = {On the ({K}.2) supercongruence of {V}an {H}amme},
   JOURNAL = {J. Math. Anal. Appl.},
  FJOURNAL = {Journal of Mathematical Analysis and Applications},
    VOLUME = {433},
      YEAR = {2016},
    NUMBER = {1},
     PAGES = {706--711},
      ISSN = {0022-247X,1096-0813},
   MRCLASS = {11B65 (33C20 33F10)},
  MRNUMBER = {3388817},
MRREVIEWER = {Shaun\ Cooper},
       DOI = {10.1016/j.jmaa.2015.08.009},
       URL = {https://doi.org/10.1016/j.jmaa.2015.08.009},
}

@article {Sun,
    AUTHOR = {Sun, Zhi-Wei},
     TITLE = {Open Conjectures on Congruences},
   JOURNAL = {Nanjing Univ.~J.~Math.~Biquarterly},
  FJOURNAL = {Nanjing Univ.~J.~Math.~Biquarterly},
    VOLUME = {36},
      YEAR = {2019},
     PAGES = {no. 1, 1-99},
       DOI = {10.3969/j.issn.0469-5097.2019.01.01},
}

@article {Swisher15,
    AUTHOR = {Swisher, Holly},
     TITLE = {On the supercongruence conjectures of van {H}amme},
   JOURNAL = {Res. Math. Sci.},
  FJOURNAL = {Research in the Mathematical Sciences},
    VOLUME = {2},
      YEAR = {2015},
     PAGES = {Art. 18, 21},
      ISSN = {2522-0144,2197-9847},
   MRCLASS = {33C20 (33E50 44A20)},
  MRNUMBER = {3411813},
MRREVIEWER = {Dilip\ Kumar},
       DOI = {10.1186/s40687-015-0037-6},
       URL = {https://doi.org/10.1186/s40687-015-0037-6},
}

@incollection {VanHamme97,
    AUTHOR = {van Hamme, L.},
     TITLE = {Some conjectures concerning partial sums of generalized
              hypergeometric series},
 BOOKTITLE = {{$p$}-adic functional analysis ({N}ijmegen, 1996)},
    SERIES = {Lecture Notes in Pure and Appl. Math.},
    VOLUME = {192},
     PAGES = {223--236},
 PUBLISHER = {Dekker, New York},
      YEAR = {1997},
      ISBN = {0-8247-0038-4},
   MRCLASS = {33C20 (11S80)},
  MRNUMBER = {1459212},
MRREVIEWER = {Anne\ Schilling},
}

@article {Zudilin09,
    AUTHOR = {Zudilin, Wadim},
     TITLE = {Ramanujan-type supercongruences},
   JOURNAL = {J. Number Theory},
  FJOURNAL = {Journal of Number Theory},
    VOLUME = {129},
      YEAR = {2009},
    NUMBER = {8},
     PAGES = {1848--1857},
      ISSN = {0022-314X,1096-1658},
   MRCLASS = {33C20 (11B65 11F33 11Y55 11Y60)},
  MRNUMBER = {2522708},
MRREVIEWER = {David\ Y.\ Jao},
       DOI = {10.1016/j.jnt.2009.01.013},
       URL = {https://doi.org/10.1016/j.jnt.2009.01.013},
}

\end{document}